\newtheorem{thm}{Theorem}
\newtheorem{lem}[thm]{Lemma}
\newtheorem{cor}[thm]{Corollary}
\newtheorem{prop}[thm]{Proposition}
\newtheorem{defn}[thm]{Definition}
\newtheorem{exmp}[thm]{Example}
\date{}
\begin{document}
\setlength{\baselineskip}{16pt}
\title{Quantum Boolean Algebras }
\author{Rafael D\'\i az} \maketitle

\begin{abstract}
We introduce quantum Boolean algebras which are the analogue of the Weyl algebras for
Boolean affine spaces. We study quantum Boolean algebras from the logical and set theoretical viewpoints.
\end{abstract}

\section{Introduction}

After Stone \cite{sto} and  Zhegalkin \cite{zh},
Boole's main contribution to science \cite{boo} can be understood as the realization that
the mathematics of logical phenomena is controlled -- to a large extend -- by the field $\mathbb{Z}_2=\{0,1\}$  with two elements; in contrast
the mathematics of classical physical phenomena is controlled -- to a large extend --  by  the field $\mathbb{R}$ of real numbers.
The switch from $\mathbb{Z}_2$ to $\mathbb{R}$  corresponds with a deep ontological jump from
logical to physical phenomena. The switch from $\mathbb{R}$ to  $\mathbb{C}$ corresponds to the jump from classical
to quantum physics.\\

What makes the logic/physics jump possible is the fact
that $\mathbb{Z}_2$ may be regarded as an object of two different categories.
On the one hand, it is a field $(\mathbb{Z}_2, +, . ) $ with sum and product defined by making $0$ the neutral element and $1$ the product unit.
On the other hand, it is a set of truth values  with
$0$ and $1$ representing falsity and truth, respectively. Indeed, $(\mathbb{Z}_2,\vee, \wedge, \overline{( \ )}) $ is a Boolean algebra: a complemented distributive lattice with minimum $0$ and maximum $1$. The operations $\vee, \wedge,$ and $\overline{( \ )}$ correspond
with the logical connectives $\mathrm{OR}$, $\mathrm{AND}$, and $\mathrm{NOT}.$
The two viewpoints are related by the identities:
$a \vee b = a+ b + ab,   \ a \wedge b = ab,    \ \overline{a}= a+1.$
These identities, together with the inverse relation $a+b=  (a  \wedge \overline{b}) \vee (\overline{a} \wedge b)$,  allow us to switch back and forth from the algebraic to the logical viewpoint.\\

By and large, the logical and algebraic viewpoints have remained separated.  In this work, in order to explore quantum-like phenomena in characteristic $2$, we place ourselves at the jump. Our algebraic viewpoint is, in a sense, complementary to the quantum logic approach  initiated by Birkhoff and von Neumann \cite{bi} based on the theory of lattices.
For example, while the meet in quantum logic is a commutative connective, we propose in this work a quantum analogue for the meet which turns out to be non-commutative.
The appearance of non-commutative operations is an essential feature of quantum mechanics \cite{ba, co1, von}.\\

We take as our guide the well-known fact that the quantization of canonical phase space may be identified with
the algebra of differential operators on configuration space. In analogy with the real/complex case, we introduce
 the algebra $\mathrm{BDO}_n$ of Boolean differential operators on $ \mathbb{Z}_2^n.$
We provide a couple of presentations by generators and relations of $\mathrm{BDO}_n$,  giving
rise to the Boole-Weyl algebras $\mathrm{BW}_n$ and the shifted Boole-Weyl algebras $\mathrm{SBW}_n$. We call these algebras the
quantum Boolean algebras. We study the structural coefficients of $\mathrm{BW}_n$ and $\mathrm{SBW}_n$ in various bases. \\

Having introduced quantum Boolean algebras, we proceed to study them from the logical and
set theoretical  viewpoints. For us, the main difference between classical
and quantum logic rest on the fact that classical observations, propositions, can be measured without, in principle, modifying
the state of the system; quantum observations, in contrast, are quantum operators:  the measuring process
changes the state of the system. Indeed, regardless of the actual state of the system, after measurement the system will be
an eigenstate of the observable. Quantum observables are operators acting on the states of the system, and thus quite different to
classical observables which are descriptions of the state of the system.\\

This work is organized as follows. In Section \ref{s1} we review some standard facts on regular functions on
affine spaces over $\mathbb{Z}_2$. In Section \ref{s2} we introduce $\mathrm{BDO}_n$, the algebra of Boolean differential operators
on $\mathbb{Z}_2^n$.
In Section \ref{bwa} we introduce the Boole-Weyl algebra  $\mathrm{BW}_n$ which is a presentation by generators
and relations of $\mathrm{BDO}_n$. We describe the structural coefficients of $\mathrm{BW}_n$ in several bases.
In Section \ref{s3} we introduce the shifted Boole-Weyl algebra  $\mathrm{SBW}_n$ which is another presentation by generators
and relations of $\mathrm{BDO}_n$, and describe the structural coefficients of $\mathrm{SBW}_n$ in several bases.
In Section \ref{s5} we discuss the logical aspects of our constructions: we introduce a quantum operational logic that generalizes classical propositional logic, and for which Boolean differential operators play a semantic role akin to that played by truth functions  in classical propositional logic. We use the theory of operads and props to describe our results. In Section \ref{s6} we adopt a set theoretical viewpoint and show that
just as classical propositional logic is intimately related with $\mathrm{P} \mathrm{P}(x)$, the Boolean algebra of sets of subsets
of $x$, quantum operational logic  is intimately related with $\mathrm{P} \mathrm{P}(x \sqcup x)$
the quantum Boolean algebra of sets of subsets of two disjoint copies of $x$. In the final Section \ref{s7} we make some closing remarks
and mention a few topics for future research.

\section{Regular Functions on Boolean Affine Spaces}\label{s1}

Our main goal in this work is to study the Boolean analogue for the Weyl algebras, and to describe those algebras
from a logical and a set theoretical viewpoints. Fixing a field $k$, the Weyl algebra $ \mathrm{W}_n $ over $k$ can be identified with the $k$-algebra of algebraic differential operators on the affine space $\mathbb{A}^n(k) = k^n.$  By definition \cite{h, s} the $k$-algebra $k[\mathbb{A}^n]$ of regular  functions on $k^n$ is the $k$-algebra of maps $$f:k^n \ \longrightarrow \ k$$ such that there exists a polynomial $F \in k[x_1,...,x_n]$ with  $f(a)=F(a)$ for all $a \in k^n.$ If $k$ is a field of characteristic zero, then the $k$-algebra of regular functions on
$k^n$ can be identified with $k[x_1,...,x_n]$ the polynomial  ring  of over $k$. Let $\partial_1,...,\partial_n$ be the derivations
of $k[x_1,...,x_n]$ given by $\partial_i x_j = \delta_{i,j}$ for $i,j \in [n]=\{1,...,n \}.$ The $k$-algebra $\mathrm{DO}_n$ of differential operators
on $k^n$ is the subalgebra of $$\mathrm{End}_k(k[x_1,...,x_n])$$ generated by $\partial_i$ and the operators
of multiplication by $\ x_i \ $ for $i \in [n]$. \\

By definition, the Weyl algebra $\mathrm{W}_n$ is the $k$-algebra defined via generators and relations as
$$k<x_1,...,x_n, y_1,..., y_n>/<x_ix_j - x_jx_i, \ y_iy_j - y_jy_i, \ y_ix_j - x_jy_i, \ y_ix_i - x_iy_i -1> ,$$
where $k<x_1,...,x_n, y_1,..., y_n>$ is the free associative $k$-algebra generated by $x_1,...,x_n, y_1,..., y_n,$ and
$<x_ix_j - x_jx_i, \ y_iy_j - y_jy_i, \ y_ix_j - x_jy_i , \ y_ix_i - x_iy_i -1>$
is the ideal generated by the relations $x_ix_j =x_jx_i$ and $y_iy_j = y_jy_i$ for $i,j \in [n]$, $y_ix_j = x_jy_i$  for $i \neq j \in [n]$, $y_ix_i = x_iy_i +1$ for $i \in [n]$.\\

The Weyl algebra $\mathrm{W}_n$ comes  with a natural representation $\mathrm{W}_n \ \longrightarrow \ \mathrm{End}_k(k[x_1,...,x_n])$
sending $y_i$ to $\partial_i$ and $x_i$ to the
operator of multiplication by $x_i$. This representation induces an isomorphism of algebras
$\mathrm{W}_n \rightarrow \mathrm{DO}_n.$\\

We proceed to study the analogue of the Weyl algebras for the Boolean affine spaces $\mathbb{A}^n(\mathbb{Z}_2)=\mathbb{Z}_2^n.$
First, we review some basic facts on regular functions on $\mathbb{Z}_2^n.$
Let $\mathrm{M}(\mathbb{Z}_2^n, \mathbb{Z}_2)$ be the $\mathbb{Z}_2$-algebra of all maps from $\mathbb{Z}_2^n$ to $\mathbb{Z}_2$ with pointwise addition
and multiplication. The $\mathbb{Z}_2$-algebra  $\mathbb{Z}_2[\mathbb{A}^n]$ of regular functions on $\mathbb{Z}_2^n$ is the
sub-algebra of $\mathrm{M}(\mathbb{Z}_2^n, \mathbb{Z}_2)$ consisting of the maps $f: \mathbb{Z}_2^n \longrightarrow \mathbb{Z}_2$ for which there exists a polynomial $F \in \mathbb{Z}_2[x_1,...,x_n]$
such that $f(a)=F(a)$ for all $a \in \mathbb{Z}_2^n.$
In this case $\mathbb{Z}_2[\mathbb{A}^n]$ is not a polynomial ring; instead we have the  following result.

\begin{lem}\label{bag}{\em There is an exact sequence of $\mathbb{Z}_2$-algebras
$$0 \ \longrightarrow \ <x_1^2 +x_1,...., x_n^2 +x_n> \ \longrightarrow \ \mathbb{Z}_2[x_1,...,x_n] \ \longrightarrow \   \mathbb{Z}_2[\mathbb{A}^n] \longrightarrow 0$$
where $<x_1^2 +x_1,...., x_n^2 +x_n>$ is the ideal generated by the relations $x_i^2=x_i$ for $i\in [n].$}
\end{lem}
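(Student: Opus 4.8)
The plan is to identify the right-hand map as the evaluation homomorphism $\mathrm{ev}\colon \mathbb{Z}_2[x_1,\dots,x_n] \to \mathbb{Z}_2[\mathbb{A}^n]$ sending a polynomial $F$ to the regular function $a \mapsto F(a)$, and the left-hand map as the inclusion of the ideal $I = \langle x_1^2+x_1,\dots,x_n^2+x_n\rangle$. Since $\mathbb{Z}_2[\mathbb{A}^n]$ is by definition the image of $\mathrm{ev}$, surjectivity on the right is immediate, and the only real content of the lemma is exactness in the middle, namely the equality $\ker(\mathrm{ev}) = I$.

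First I would dispatch the easy inclusion $I \subseteq \ker(\mathrm{ev})$. For every $a \in \mathbb{Z}_2$ one has $a^2 = a$, so each generator $x_i^2 + x_i$ evaluates to $0$ at every point of $\mathbb{Z}_2^n$; hence each generator, and therefore the whole ideal $I$, lies in the kernel. The substantive step is the reverse inclusion $\ker(\mathrm{ev}) \subseteq I$. Here I would first reduce modulo $I$: using the relations $x_i^2 \equiv x_i \pmod I$ repeatedly, every polynomial $F$ is congruent to a \emph{multilinear} (squarefree) representative $G = \sum_{S \subseteq [n]} c_S \prod_{i \in S} x_i$ with $c_S \in \mathbb{Z}_2$. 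Since $I \subseteq \ker(\mathrm{ev})$, if $F \in \ker(\mathrm{ev})$ then $G$ also vanishes identically on $\mathbb{Z}_2^n$, and it suffices to show this forces $G = 0$ as a polynomial, for then $F = G + (\text{element of } I) \in I$.

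To prove that a multilinear $G$ vanishing on all of $\mathbb{Z}_2^n$ must be the zero polynomial, I would evaluate at the characteristic points $e_T \in \mathbb{Z}_2^n$, $T \subseteq [n]$, whose $i$-th coordinate is $1$ exactly when $i \in T$. The monomial $\prod_{i \in S} x_i$ takes the value $1$ at $e_T$ precisely when $S \subseteq T$ and $0$ otherwise, so $G(e_T) = \sum_{S \subseteq T} c_S$. The hypothesis gives $\sum_{S \subseteq T} c_S = 0$ for every $T$, and an induction on $|T|$ (the base case $T = \emptyset$ giving $c_\emptyset = 0$, and the inductive step isolating $c_T$ once all $c_S$ with $S \subsetneq T$ are known to vanish) yields $c_T = 0$ for all $T$; this is just M\"obius inversion on the Boolean lattice over $\mathbb{Z}_2$. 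Hence $G = 0$, completing the argument.

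I expect this injectivity-of-evaluation step to be the crux of the proof. One may also phrase the whole argument as a dimension count: the $2^n$ squarefree monomials span $\mathbb{Z}_2[x_1,\dots,x_n]/I$, while $\mathbb{Z}_2[\mathbb{A}^n]$ sits inside the $2^n$-dimensional space $\mathrm{M}(\mathbb{Z}_2^n, \mathbb{Z}_2)$ of all functions, so establishing the linear independence of the images of those monomials (again the M\"obius computation) simultaneously shows that the induced map $\mathbb{Z}_2[x_1,\dots,x_n]/I \to \mathbb{Z}_2[\mathbb{A}^n]$ is an isomorphism. Either framing reduces the lemma to the same triangular inversion, so that is where I would concentrate the care.
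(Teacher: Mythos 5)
Your proof is correct, and it takes a genuinely different route from the paper's. The paper also begins with the easy inclusion $I \subseteq \ker(\mathrm{ev})$, but for the converse it runs an induction on the number of variables: given $P$ vanishing on $\mathbb{Z}_2^n$, it divides by $x_1^2+x_1$ to write $P=(x_1^2+x_1)Q+R$ with $R=x_1S+T$ linear in $x_1$, evaluates at $x_1=0$ and $x_1=1$ to conclude that $T$ and $S$ vanish identically on $\mathbb{Z}_2^{n-1}$, and invokes the inductive hypothesis in $n-1$ variables to place $S,T$ (hence $R$ and $P$) in the ideal. You instead reduce once and for all to a multilinear normal form and prove injectivity of evaluation on the squarefree span by evaluating at the indicator vectors $e_T$, getting the triangular system $\sum_{S\subseteq T}c_S=0$ and killing the coefficients by induction on $|T|$ (M\"obius inversion on the Boolean lattice). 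Both arguments are elementary and complete; the paper's variable-by-variable induction is slightly more self-contained, needing no normal-form or spanning claim, while your version buys more: it exhibits the $2^n$ squarefree monomials as a basis of $\mathbb{Z}_2[x_1,\dots,x_n]/I$, gives $\dim \mathbb{Z}_2[\mathbb{A}^n]=2^n$ directly, and anticipates exactly the triangular relation $x^a=\sum_{a\subseteq b}m^b$ between the monomial and delta-function bases that the paper only establishes separately in its Lemma 2 -- facts the paper relies on later (e.g.\ in the dimension count of Theorem \ref{egre}), so your framing front-loads work the paper must do anyway.
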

 \begin{proof}
The map $\mathbb{Z}_2[x_1,...,x_n] \longrightarrow  \mathbb{Z}_2[\mathbb{A}^n]$ sends a polynomial $P$ to the
map $p:\mathbb{Z}_2^n \rightarrow \mathbb{Z}_2$ given by $p(a)=P(a).$ Clearly  $x_i^2$ and $x_i$ define
the same map $\mathbb{Z}_2^n \longrightarrow \mathbb{Z}_2$. Thus $<x_1^2 +x_1,..., x_n^2 +x_n>$ is in the kernel of the map $\mathbb{Z}_2[x_1,...,x_n] \longrightarrow  \mathbb{Z}_2[\mathbb{A}^n]$. Let $P \in \mathbb{Z}_2[x_1,...,x_n]$ be such that $P(a)=0$ for all $a \in \mathbb{Z}_2^n$.
We can write $P=(x_1^2 + x_1)Q + R,$ where $R \in \mathbb{Z}_2[x_1,...,x_n]$ is a degree $1$ polynomial in $x_1$. Therefore
$R = x_1S + T$ where $S,T \in \mathbb{Z}_2[x_2,...,x_n].$ Since $R(a)=0$ for all $a \in \mathbb{Z}_2^n$, then
$T(b)=R(0,b)=0$ for all $b \in \mathbb{Z}_2^{n-1}$, and thus we obtain that $S(b)=S(b)+T(b)=R(1,b)=0.$ We have shown that the polynomials
$S$ and $T$ define the $0$ function  and thus, by induction, we conclude that $S, T \in \ <x_2^2 +x_2,...., x_n^2 +x_n>$, which implies that $R$ and
therefore $P$ belong to $<x_1^2 +x_1,...., x_n^2 +x_n>$.
\end{proof}

Therefore the ring $\mathbb{Z}_2[\mathbb{A}^n]$ of regular functions on $\mathbb{Z}_2^n$ can be identified with the quotient ring
$$\mathbb{Z}_2[\mathbb{A}^n] \ =  \ \mathbb{Z}_2[x_1,...,x_n] /  <x_1^2 +x_1,...., x_n^2 +x_n> .$$

Often we think of $\mathbb{Z}_2^n$ as a ring, with coordinate-wise sum and product.  We identify $\mathbb{Z}_2^n$ with $\mathrm{P}[n]$, the set of subsets of $[n]$, as follows: $a \in \mathbb{Z}_2^n$ is identified with the  subset $ a \subseteq [n]$
such that $i\in a$ iff $a_i=1$. With this identification the product $ab$ of elements in $\mathbb{Z}_2^n$
agrees with the intersection $a \cap b$ of the sets $a$ and $b$; the sum $a+b$ corresponds with the symmetric difference
$a+b = (a \cup b) \setminus (a\cap b)$; the element $a +(1,...,1)$ is identified with the complement $\overline{a}$ of $a$.
Note that $a \cup b = a+b+ab$. We let $\mathrm{P} \mathrm{P}[n]$ be the set of families of subsets of $[n]$.\\

For $a\in \mathrm{P} [n], \ $ let $m^a:\mathbb{Z}_2^n \longrightarrow \mathbb{Z}_2$ be the map such that
$$m^a(b)\ = \
\left\{\begin{array}{cc} 1 & \mathrm{if} \ a = b,\\
0 & \ \mathrm{otherwise.}  \end{array}\right.$$
For $a  \in \mathrm{P}[n]$ non-empty, let $x^a \in \mathbb{Z}_2[x_1,...,x_n]$ be the monomial $x^a= \prod_{i \in a} x_i$. Also
set $x^{\emptyset}=1.$ The monomial  $x^a$
defines the map $x^a:\mathbb{Z}_2^n \longrightarrow \mathbb{Z}_2$ given by $$x^a(b)\ = \
\left\{\begin{array}{cc} 1 & \mathrm{if} \ a \subseteq b,\\
0 & \ \mathrm{otherwise.}  \end{array}\right.$$
For $a  \in \mathrm{P}[n]$ non-empty, let $w^a \in \mathbb{Z}_2[x_1,...,x_n]$ be the monomial $w^a= \prod_{i \in a} (x_i + 1)$.
Also set $w^{\emptyset}=1.$ The monomial $w^a$
defines the map $w^a:\mathbb{Z}_2^n \longrightarrow \mathbb{Z}_2$ given by $$w^a(b)\ = \
\left\{\begin{array}{cc} 1 & \ \mathrm{if} \  b \subseteq \overline{a} ,\\
0 & \ \ \mathrm{otherwise.} \end{array}\right.$$

Lemma \ref{bases}  below follows from the definitions above and the M$\ddot{\mbox{o}}$bius inversion formula \cite{GCRota}, which can be stated as follows. Given maps $f,g: \mathrm{P}[n] \longrightarrow R$, with $R$ a ring of characteristic $2$, then
$$f(b)\ = \ \sum_{a \subseteq b}g(a) \ \ \ \ \ \mbox{if and only if} \ \ \ \ \  g(b)\ = \ \sum_{a \subseteq b}f(a).$$

\begin{lem}\label{bases}{\em  The following identities hold in $\mathbb{Z}_2[\mathbb{A}^n]$:
$$1) \ m^a=x^aw^{\overline{a}}.\ \ \ \ \ \ 2) \ x^a = \sum_{a \subseteq b} m^b. \ \ \ \ \ \ 3) \ m^a= \sum_{a \subseteq b}x^b. \ \ \ \ \ \  4) \ w^b = \sum_{a \subseteq \overline{b}}m^a.$$
$$ 5) \ m^a = \sum_{\overline{a} \subseteq b}w^b. \ \ \ \ \ \ 6) \  w^b = \sum_{a \subseteq b}x^a . \ \ \ \ \ \ 7) \  x^b = \sum_{a \subseteq b}w^a.$$
$$8) \ m^am^b = \delta_{ab}m^a .\ \ \ \ \ \ 9) \ x^ax^b = x^{a \cup b} .  \ \ \ \ \ \ 10)  \  w^a w^b = w^{a\cup b} .$$}
\end{lem}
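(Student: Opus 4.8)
The plan is to exploit the identification established in the preceding lemma: since the surjection $\mathbb{Z}_2[x_1,\dots,x_n] \to \mathbb{Z}_2[\mathbb{A}^n]$ realizes every element as an honest map $\mathbb{Z}_2^n \to \mathbb{Z}_2$, two elements of $\mathbb{Z}_2[\mathbb{A}^n]$ agree if and only if they agree at every $b \in \mathrm{P}[n]$. Thus each identity reduces to a pointwise check, and I would group the ten statements according to the cheapest available route.

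First I would dispose of the multiplicative identities $(8)$, $(9)$, $(10)$ algebraically. For $(9)$ and $(10)$ one multiplies the defining products and uses the quotient relations $x_i^2 = x_i$ and $(x_i+1)^2 = x_i^2 + 1 = x_i+1$ to collapse the factors indexed by $a \cap b$, yielding $x^a x^b = x^{a\cup b}$ and $w^a w^b = w^{a\cup b}$. For $(8)$, evaluating $m^a m^b$ at $c$ gives the product $m^a(c)m^b(c)$, which is nonzero only when $a = c = b$; hence the product equals $m^a$ if $a=b$ and $0$ otherwise.

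Next I would verify the base incidence identities $(1)$, $(2)$, $(4)$, $(6)$ directly. Evaluating $x^a w^{\overline{a}}$ at $b$ yields the product of the indicators of $a \subseteq b$ and $b \subseteq a$, which is $1$ exactly when $a=b$, giving $(1)$. For $(2)$ and $(4)$, evaluating the right-hand sums at $c$ leaves a single surviving Kronecker term and reproduces the conditions $a \subseteq c$ and $c \subseteq \overline{b}$, matching $x^a(c)$ and $w^b(c)$ respectively. For $(6)$ the fastest argument is to expand $w^b = \prod_{i\in b}(x_i+1)$ by distributivity into $\sum_{a \subseteq b} x^a$; alternatively one checks pointwise that $\sum_{a \subseteq b} x^a(c) = 2^{|b\cap c|}$, which vanishes modulo $2$ unless $b \cap c = \emptyset$, precisely the condition under which $w^b(c)=1$.

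Finally, the remaining identities $(3)$, $(5)$, $(7)$ I would obtain for free by M\"obius inversion rather than by recomputation. Identity $(3)$ is the inverse of $(2)$ under the upward (superset) form of the characteristic-$2$ inversion recalled before the lemma, and $(7)$ is the inverse of $(6)$ under its downward (subset) form. Identity $(5)$ is the inverse of $(4)$, but here the complementation must first be absorbed: rewriting $(4)$ as $w^{\overline{d}} = \sum_{c \subseteq d} m^c$ puts it in standard downward form with $f(d)=w^{\overline{d}}$ and $g(c)=m^c$, which inverts to $m^d = \sum_{c\subseteq d} w^{\overline{c}}$, and the substitution $e=\overline{c}$ rewrites $c\subseteq d$ as $\overline{d}\subseteq e$ to deliver $(5)$. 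I expect the only genuine care is in this last reindexing --- keeping the direction of the inclusions and the complement straight --- since every other identity is a one-line evaluation or a direct expansion.
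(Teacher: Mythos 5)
Your proposal is correct and matches the paper's approach: the paper offers no written proof beyond the remark that the lemma ``follows from the definitions and the M\"obius inversion formula,'' and your pointwise evaluations plus inversion (including the careful complementation reindexing for identity $(5)$) are exactly the details that remark leaves implicit. The only cosmetic point is that the paper states inversion only in the downward (subset) form, so the upward (superset) form you invoke for $(3)$ should be noted as the immediate variant obtained by complementing variables, just as you do for $(5)$.
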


Note that $ \mathbb{Z}_2[\mathbb{A}^n] = \mathrm{M}(\mathbb{Z}_2^n, \mathbb{Z}_2)$, indeed a map $f:\mathbb{Z}_2^n \longrightarrow \mathbb{Z}_2 $ can be written as
$$f\ = \ \sum_{ \ f(a)=1}m^a\ = \ \sum_{ \ f(a)=1} x^a w^{\overline{a}} \ = \ \sum_{ \ f(a)=1}\prod_{i \in a}x_i\prod_{i \in \overline{a}}(x_i +1)$$
$$= \ \sum_{f(a)=1, \ b \subseteq \overline{a}}  x^{a \cup b}\ = \ \sum_{f(a)=1, \ a \subseteq b}  x^{ b}.$$

From Lemma \ref{bases} we see that there are several natural bases for the $\mathbb{Z}_2$-vector space
$$\mathbb{Z}_2[\mathbb{A}^n] \ \ = \ \ \mathbb{Z}_2[x_1,...,x_n] /  <x_1^2 +x_1,...., x_n^2 +x_n> ,$$
namely we can pick $\ \ \{m^a \ | \ a \in \mathrm{P} [n]  \}, \ \   \{x^a \ | \ a \in \mathrm{P} [n]  \},\ \ \mbox{or} \ \ \{w^a \ | \ a \in \mathrm{P} [n]  \}.$ We
use the following notation to write the coordinates of $f \in \mathbb{Z}_2[\mathbb{A}^n]$ in each one of these bases
$$f \ = \ \sum_{a \in \mathrm{P}[n]}f(a)m^a \ = \ \sum_{a \in \mathrm{P} [n]}f_x(a)x^a \ = \ \sum_{a \in \mathrm{P}[n]}f_w(a)w^a.$$
We obtain three linear maps $f \longrightarrow f, \ f \longrightarrow f_x, \ \mbox{and} \ f \longrightarrow f_w$ from $\mathbb{Z}_2[\mathbb{A}^n]$
to $\mathrm{M}(\mathbb{Z}_2^n, \mathbb{Z}_2)$.  The coordinates $f$, $f_x$ and $f_w$ are connected, via the M$\ddot{\mbox{o}}$bius inversion formula,  by the relations: $$f_x(b)\ = \ \sum_{a \subseteq b} f(a), \ \ \ \ \ \ f(b)\ = \ \sum_{a \subseteq b} f_x(a),
\ \ \ \ \ \ f_w(b)\ = \ \sum_{a \subseteq b} f(\overline{a}),$$
$$f(b)\ = \ \sum_{a \subseteq \overline{b}} f_w(a), \ \ \ \ \ \ f_x(a)\ = \ \sum_{a \subseteq b} f_w(b) , \ \ \ \ \ \ f_w(a)\ = \ \sum_{a \subseteq b} f_x(b).$$

The maps $f \longrightarrow f_x \ \ \mbox{and} \ \ f \longrightarrow f_w$ fail to be ring morphisms. Instead we have the identities:
$$(fg)_x(c)\ = \ \sum_{a \cup b =c}f_x(a)g_x(b) \ \ \ \ \ \ \ \ \mbox{and} \ \ \ \ \ \ \ \ (fg)_w(c)\ = \ \sum_{a \cup b =c}f_w(a)g_w(b).$$

We define a predicate $O$ on finite sets as follows: given a finite set $a$,  then
$Oa$ holds if and only if the cardinality of $a$ is an odd number. In other words, $O$ is the map from
finite sets to $\mathbb{Z}_2$ such that $Oa=1$ if and only if the cardinality of $a$ is odd.

\begin{exmp}{\em Let $C \in \mathrm{P}\mathrm{P}[n]$. An ordered $k$-covering of $a\in \mathrm{P}[n]$ by elements of $C$   is a tuple $c_1,...,c_k \in C$ such that
$$c_1 \cup \cdots \cup c_k \ = \ a.$$ Let $k$-$Cov_C(a)$ be the set of $k$-coverings of $a$  by elements of $C$.
Then $a\in \mathrm{P}[n]$ belongs to $C$ if and only if  $|k$-$Cov_C(a)|$ is odd for every $k \geq 1$. Indeed, let $f \in \mathbb{Z}_2[\mathbb{A}^n]$
be given by $$f\ = \ \sum_{c \in C}x^c\ = \sum_{a \in \mathrm{P} [n]}1_C(a)x^a,$$ where
$1_C:  \mathrm{P} [n] \longrightarrow \mathbb{Z}_2$ is the characteristic function of $C$. Since $f^k=f$ for every
$f \in \mathbb{Z}_2[\mathbb{A}^n]$, we have that $$\sum_{a \in \mathrm{P} [n]}1_C(a)x^a \ = \ f \ = \ f^k \ = \
\sum_{a \in \mathrm{P} [n]}\left( \sum_{a_1 \cup \cdots \cup a_k = a}\prod_{i=1}^k 1_C(a_i)\right)x^a \ = \
\sum_{a \in \mathrm{P} [n]} O(k\mbox{-}Cov_C(a)) x^a .$$
We conclude that $1_C(a)=O(k\mbox{-}Cov_C(a))$, and thus $a \in C$ if and only if  $|k\mbox{-}Cov_C(a)|$ is odd.
}
\end{exmp}

\section{Differential Operators on Boolean Affine Spaces}\label{s2}

Next we consider the algebra of differential operators on affine Boolean spaces. Note that the partial derivatives $\partial_i$ on $\mathbb{Z}_2[x_1,...,x_n]$ do not descent to well-defined operators on $\mathbb{Z}_2[\mathbb{A}^n]$; indeed if we had such an
operator, then $0= x_i + x_i = \partial_i x_i^2 = \partial_i x_i =1.$ It is thus necessary to introduce an alternative definition for the partial derivatives $\partial_i$ on $\mathbb{Z}_2[\mathbb{A}^n]$.\\

The Boolean partial derivative $\partial_i f:\mathbb{Z}_2^n \longrightarrow \mathbb{Z}_2$ of a map
$f:\mathbb{Z}_2^n \longrightarrow \mathbb{Z}_2$ is given \cite{bro, reed} by
$$ \partial_i f(x) \ = \ f(x + e_i) + f(x)$$ where $e_i \in \mathbb{Z}_2^n$ is the vector with vanishing entries except at position $i$.
With this definition  $\partial_i x_i = \partial_i x_i^2=1$, the contradiction above does not arise, and we obtained
well-defined operators  $$\partial_i : \mathbb{Z}_2[\mathbb{A}^n] \ \longrightarrow \ \mathbb{Z}_2[\mathbb{A}^n].$$
The operators $\partial_i$ fail to be derivations; instead they  satisfy the twisted Leibnitz
identity  $$\partial_i(fg)\ = \ (\partial_if) g + (s_if)(\partial_ig)$$ where the shift operators $s_i : \mathbb{Z}_2[\mathbb{A}^n] \longrightarrow \mathbb{Z}_2[\mathbb{A}^n]$ are given
by $s_if(x) = f(x + e_i).$ Indeed:
\begin{eqnarray*}
\partial_i(fg)(x)&=& f(x+e_i)g(x+ e_i) \ + \ f(x)g(x)\\
&=& [f(x+ e_i) + f(x)]g(x) \ + \ f(x+e_i)[ g(x+ e_i)+g(x)]\\
&=&\partial_if(x) g(x) \ + \ s_if(x)\partial_ig(x).
\end{eqnarray*}
The operators $\partial_i$ are nilpotent, i.e. $\partial_i^2=0,$ since:
$$\partial_i^2f(x)\ = \ \partial_if(x +e_i) + \partial_if(x)\ = \ f(x) + f(x +e_i) + f(x + e_i) + f(x)\ = \ 0.$$

\begin{defn}
{\em The $\mathbb{Z}_2$-algebra $\mathrm{BDO}_n$ of Boolean differential operators on $\mathbb{Z}_2^n$ is the $\mathbb{Z}_2$-subalgebra of
$\mathrm{End}_{\mathbb{Z}_2}(\mathbb{Z}_2[\mathbb{A}^n])$ generated by $\partial_i$ and the operators of multiplication by $x_i$ for $i \in [n].$}
\end{defn}

\begin{thm}\label{rel}{\em  The following identities hold for $x_i, \partial_i, s_i \in \mathrm{BDO}_n$ and $i\in [n]$:
$$1.\ x_i^2=x_i. \ \ \ \ \ 2. \ \partial_i^2=0. \ \ \ \ \ 3. \ s_i^2=1. \ \ \ \ \ 4. \ \partial_i = s_i +1. \ \ \ \ \ 5. \  \partial_is_i = s_i\partial_i = \partial_i.$$
$$6. \ s_i = \partial_i +1. \ \ \ 7. \ s_ix_i = x_is_i +s_i=(x_i+1)s_i. \ \ \ 8.  \  \partial_ix_i=x_i\partial_i +s_i = x_i\partial_i + \partial_i +1.$$}
\end{thm}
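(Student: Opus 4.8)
The plan is to verify each of the eight identities directly from the operator definitions on $\mathbb{Z}_2[\mathbb{A}^n]$, treating each as an equality of maps $\mathbb{Z}_2[\mathbb{A}^n] \to \mathbb{Z}_2[\mathbb{A}^n]$ and checking it pointwise on an arbitrary $f$ evaluated at an arbitrary $x \in \mathbb{Z}_2^n$. The three governing definitions are $x_i f(x) = x_i f(x)$ (multiplication), $s_i f(x) = f(x+e_i)$ (shift), and $\partial_i f(x) = f(x+e_i) + f(x)$ (Boolean derivative). Since we work in characteristic $2$, subtraction equals addition throughout, which simplifies the bookkeeping considerably.

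Let me sketch the dependency order, since the identities build on one another. Items $1$ and $2$ are already established in the text preceding the theorem ($x_i^2 = x_i$ from Lemma~1, and $\partial_i^2 = 0$ from the nilpotency computation). For item $3$, I would compute $s_i^2 f(x) = s_i f(x + e_i) = f(x + e_i + e_i) = f(x)$, using $e_i + e_i = 0$ in $\mathbb{Z}_2^n$. Item $4$ is essentially the definition rearranged: $\partial_i f(x) = f(x+e_i) + f(x) = s_i f(x) + f(x)$, so $\partial_i = s_i + 1$ as operators. Item $6$ is then immediate from item $4$ by adding $1$ to both sides (again characteristic $2$, so $s_i = \partial_i + 1$). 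Item $5$ would follow cleanly from items $3$ and $4$: compute $\partial_i s_i = (s_i + 1)s_i = s_i^2 + s_i = 1 + s_i = \partial_i$, and symmetrically $s_i \partial_i = s_i(s_i + 1) = \partial_i$, so the derived algebraic identities reduce to the already-proven item $3$.

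The two genuinely substantive identities are $7$ and $8$, the commutation relations, and these are where I would slow down. For item $7$, the key observation is that the shift and multiplication operators interact through the twist at coordinate $i$: I would compute $s_i x_i f(x) = (x_i f)(x + e_i) = (x_i + 1) f(x + e_i)$, since shifting the $i$-th coordinate sends $x_i \mapsto x_i + 1$. This gives $s_i x_i = (x_i + 1)s_i = x_i s_i + s_i$, which is the claimed identity. For item $8$, I would feed item $7$ through the relation $\partial_i = s_i + 1$: starting from $\partial_i x_i = (s_i + 1)x_i = s_i x_i + x_i$, substitute the result of item $7$ to get $s_i x_i + x_i = (x_i s_i + s_i) + x_i = x_i s_i + x_i + s_i$. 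Then rewrite $x_i s_i = x_i(\partial_i + 1) = x_i \partial_i + x_i$ using item $6$, yielding $\partial_i x_i = x_i \partial_i + x_i + x_i + s_i = x_i \partial_i + s_i$, and finally replace $s_i = \partial_i + 1$ to obtain $x_i \partial_i + \partial_i + 1$.

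The only real obstacle is the careful handling of item $7$, specifically the step $(x_i f)(x + e_i) = (x_i + 1)f(x + e_i)$: one must recognize that multiplication by $x_i$ as a function, when evaluated at the shifted point $x + e_i$, picks up the $i$-th coordinate of the shifted argument, which is $x_i + 1$ rather than $x_i$. Everything else is formal manipulation in a commutative-subtraction-free setting. Once item $7$ is correct, items $8$, $5$, and $6$ cascade from the linear relation $\partial_i = s_i + 1$ with no further geometric input, so I would organize the writeup to prove $3$, $4$, $7$ first and then derive the rest purely algebraically.
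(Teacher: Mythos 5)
Your proposal is correct and takes essentially the same approach as the paper: your pointwise verifications of items 3, 4, and 7 coincide with the paper's own computations, including the one genuinely delicate step, the observation that $(x_i f)(x+e_i) = (x_i+1)f(x+e_i)$. The only difference is organizational --- the paper also checks items 5, 6, and 8 by direct pointwise evaluation, whereas you derive them algebraically from 3, 4, and 7 using the ring structure of $\mathrm{End}_{\mathbb{Z}_2}(\mathbb{Z}_2[\mathbb{A}^n])$, a valid and slightly leaner arrangement of the same argument.
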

\begin{proof}
We have already shown that $x_i^2=x_i$ and $\partial_i^2=0$.  For the other identities we have that:\\

\noindent $\bullet$ $s_i^2f(x)= s_if(x + e_i)=f(x + e_i +e_i)=f(x);$\\

\noindent $\bullet$ $\partial_if(x)=f(x+ e_i)+ f(x)=s_if(x)+f(x)=(s_i +1)f(x);$\\

\noindent $\bullet$ $s_i\partial_if(x)=\partial_if(x +e_i)=f(x+e_i + e_i)+f(x+e_i)=f(x)+f(x+ e_i)=\partial_if(x) ;$\\

\noindent $\bullet$ $\partial_is_if(x)=s_if(x+e_i)+s_if(x)=f(x+e_i+e_i)+ f(x+e_i)=f(x)+f(x+ e_i)=\partial_if(x) ;$\\

\noindent $\bullet$ $s_ix_if(x)=(x_i+1)f(x+e_i)= x_if(x + e_i) + f(x + e_i)=x_is_if(x) + s_if(x)=(x_is_i + s_i)f(x);$\\

\noindent $\bullet$ $s_if(x)=f(x +e_i)= f(x +e_i) +f(x)+f(x)=\partial_if(x) + f(x)= (\partial_i +1)f(x);$\\

\noindent $\bullet$ $\partial_i(x_if)(x)=x_if(x+ e_i) + f(x + e_i) + x_if(x) =x_i(f(x+ e_i) + f(x))+ f(x + e_i),$ thus \\

\noindent $\bullet$ $\partial_i(x_if)=x_i\partial_if + f(x +e_i)=(x_i\partial_i + s_i)f=(x_i\partial_i + \partial_i +1)f.$

\end{proof}

The operator $\partial_i$ acts on the bases $m^a, \ x^a \ \mbox{and} \ w^a $
as follows:
$$\partial_i m^a = m^{a + e_i}+ m^a, \ \ \ \ \partial_ix^a=
\left\{\begin{array}{cc} x^{a \setminus i} & \mathrm{if}\  i\in a \\
0 & \ \ \ \mathrm{otherwise,}  \end{array}\right. \ \ \mbox{and} \ \ \ \ \partial_iw^a=
\left\{\begin{array}{cc}  w^{a \setminus i} & \mathrm{if}\  i\in a \\
0 & \ \ \ \ \mathrm{otherwise}.  \end{array}\right.$$

\noindent From these expressions we obtain that:\\

\noindent $\bullet \ \partial_i f(a)=1$ if and only if $f(a)\neq f(a+e_i)$, that is
$$\partial_i f \ \ = \ \ \underset{{a \in \mathrm{P} [n], \ f(a)\neq f(a + e_i)}} {\sum}m^a .$$

\noindent $\bullet  \  (\partial_i f)_x(a)=f_x(a \cup i)$ if $i \notin a$ and $ (\partial_i f)_x(a)=0$ if $i \in a$, that is
$$\partial_i f\ \ =\ \ \underset{{i \in a \in \mathrm{P} [n]}} {\sum}f_x(a)x^{a-i} .$$
\noindent $\bullet   \  (\partial_i f)_w(a)=f_w(a \cup i)$ if $i \notin a$ and $ (\partial_i f)_w(a)=0$ if $i \in a$, that is
$$\partial_i f\ \ = \ \  \underset{{i \in a \in \mathrm{P} [n]}} {\sum}f_w(a)w^{a-i}.$$

More generally one can show by induction, for $a,b \in \mathrm{P}[n],$ that:
$$\partial^b m^a = \sum_{c \subseteq b}m^{a + c}, \ \ \ \partial^bx^a=
\left\{\begin{array}{cc} x^{a \setminus b} & \mathrm{if}\  b \subseteq a \\
0 & \ \ \ \mathrm{otherwise,}  \end{array}\right. \ \ \ \mbox{and}\ \ \ \  \partial^bw^a=
\left\{\begin{array}{cc} w^{a \setminus b} & \mathrm{if}\  b \subseteq a \\
0 & \ \ \ \mathrm{otherwise.}  \end{array}\right.$$

By definition  $\mathrm{BDO}_n \subseteq \mathrm{End}_{\mathbb{Z}_2}(\mathbb{Z}_2[\mathbb{A}^n])$ acts naturally on
$\mathbb{Z}_2[\mathbb{A}^n]$, so we get a map
$$\mathrm{BDO}_n \otimes_{\mathbb{Z}_2}\mathbb{Z}_2[\mathbb{A}^n] \longrightarrow \mathbb{Z}_2[\mathbb{A}^n].$$

\begin{prop}\label{do}
{\em Consider maps $D:\mathrm{P}[n]\times \mathrm{P}[n] \longrightarrow \mathbb{Z}_2\ $ and $\ f:\mathrm{P}[n] \longrightarrow \mathbb{Z}_2$.\\

\noindent 1. Let $D=\underset{{a,b \in \mathrm{P}[n]}} {\sum}D(a,b)m^a\partial^b \in \mathrm{BDO}_n$,
$f=\underset{{c \in \mathrm{P} [n]}} {\sum}f(c)m^c \in \mathbb{Z}_2[\mathbb{A}^n]$, and
$Df=\underset{{a \in \mathrm{P}[n]}} {\sum}Df(a)m^a $. Then we have that $$Df(a)\  = \ {\underset{ e \subseteq b }{\sum}}D(a,b)f(a+e). $$
\noindent 2. Let $D=\underset{{a,b \in \mathrm{P} [n]}} {\sum}D_x(a,b)x^a\partial^b \in \mathrm{BDO}_n$, \
$f=\underset{{c \in \mathrm{P} [n]}} {\sum}f_x(c)x^c \in \mathbb{Z}_2[\mathbb{A}^n]$, \ and
$Df=\underset{{e \in \mathrm{P} [n]}} {\sum}Df_x(e)x^e$. Then
$$Df_x(e)\ = \ \underset{{\underset {a \cup (c\setminus b)=e}{a,\ b \subseteq c}}} {\sum} D_x(a,b)f_x(c) .$$
}
\end{prop}

\begin{proof}
$$1. \ \ \ \ \ \ Df\ = \ \underset{{a,b,c  \in \mathrm{P}[n]}} {\sum}D(a,b)f(c)m^a\partial^bm^c\ = \ \underset{{a,e \subseteq b,c  }} {\sum}D(a,b)f(c)m^am^{c+e}$$ $$= \ \underset{{a,e \subseteq b }} {\sum}D(a,b)f(a+e)m^a\ = \
\sum_{a  \in \mathrm{P}[n] } \left( \sum_{e \subseteq b }D(a,b)f(a+e) \right)m^a.$$

$$2. \ \ \ \ \ \ Df\ = \ \underset{{a,b,c  \in \mathrm{P}[n]}} {\sum}D_x(a,b)f_x(c)x^a\partial^bx^c \ = \ \underset{{a, b \subseteq c  }} {\sum}D_x(a,b)f_x(c)x^{a \cup c \setminus b}$$
$$= \sum_{{e  \in \mathrm{P}[n]}} \left(  \underset{{\underset {a \cup (c\setminus b)=e}{a,\ b \subseteq c}}} {\sum} D_x(a,b)f_x(c)  \right)x^e.$$

\end{proof}

\begin{thm}\label{egre}
{\em For $n \geq 1$ we have that $\ \mathrm{BDO}_n \ = \ \mathrm{End}_{\mathbb{Z}_2}(\mathbb{Z}_2[\mathbb{A}^n]).$
}
\end{thm}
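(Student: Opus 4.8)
The plan is to prove both inclusions, the inclusion $\mathrm{BDO}_n \subseteq \mathrm{End}_{\mathbb{Z}_2}(\mathbb{Z}_2[\mathbb{A}^n])$ being immediate from the definition. For the reverse inclusion I would first check that each operator $m^a\partial^b$ with $a,b \in \mathrm{P}[n]$ lies in $\mathrm{BDO}_n$: the operator $\partial^b=\prod_{i\in b}\partial_i$ is a product of generators, and multiplication by $m^a = x^aw^{\overline{a}} = \prod_{i\in a}x_i\prod_{i\notin a}(x_i+1)$ is a polynomial in the multiplication operators $x_i$, hence again built from generators and the identity. There are exactly $2^n\cdot 2^n = 4^n$ operators of the form $m^a\partial^b$. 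Since $\{m^a : a\in\mathrm{P}[n]\}$ is a basis of $\mathbb{Z}_2[\mathbb{A}^n]$ (Lemma \ref{bases}), that space has dimension $2^n$, so $\dim_{\mathbb{Z}_2}\mathrm{End}_{\mathbb{Z}_2}(\mathbb{Z}_2[\mathbb{A}^n]) = 4^n$. Thus it suffices to prove that the $m^a\partial^b$ are linearly independent; being $4^n$ independent vectors in a $4^n$-dimensional space, they will then form a basis of the endomorphism algebra, forcing $\mathrm{BDO}_n = \mathrm{End}_{\mathbb{Z}_2}(\mathbb{Z}_2[\mathbb{A}^n])$.

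For the independence I would compute the action of $m^a\partial^b$ on the basis $\{m^c\}$ using the formulas already recorded. From $\partial^b m^c = \sum_{d\subseteq b} m^{c+d}$ and the orthogonality $m^am^{c+d} = \delta_{a,c+d}\,m^a$ of Lemma \ref{bases}, one finds that $m^a\partial^b(m^c) = m^a$ when $a+c\subseteq b$ and $0$ otherwise, where $a+c$ is the symmetric difference, i.e. the unique $d$ with $c+d=a$. Hence for $D=\sum_{a,b}D(a,b)m^a\partial^b$ the coefficient of $m^a$ in $D(m^c)$ equals $\sum_{b\supseteq a+c}D(a,b)$, which is exactly the specialization of Proposition \ref{do}(1) to $f=m^c$. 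If $D=0$ then $\sum_{b\supseteq a+c}D(a,b)=0$ for all $a,c$.

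To finish I would fix $a$ and let $c$ vary: since $c\mapsto a+c$ is a bijection of $\mathrm{P}[n]$, the vanishing reads $\sum_{b\supseteq s}D(a,b)=0$ for every $s\in\mathrm{P}[n]$. This linear system is triangular with respect to inclusion — the complementary form of the M\"obius inversion formula quoted before Lemma \ref{bases}, obtained via $b\mapsto\overline{b}$ — so a downward induction on $|s|$, starting from $s=[n]$ where the sum is the single term $D(a,[n])$, yields $D(a,b)=0$ for all $b$. As $a$ was arbitrary, all coefficients vanish and independence follows. I do not expect a genuine obstacle here: the only substantive ingredients are the dimension count $\dim\mathrm{End}=4^n$, which matches exactly the number of available operators $m^a\partial^b$, and the invertibility of the triangular M\"obius system; the remaining manipulations are the bookkeeping already carried out in Proposition \ref{do}.
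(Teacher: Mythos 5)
Your proof is correct, and while it shares the paper's overall skeleton --- a dimension count $\dim\mathrm{End}_{\mathbb{Z}_2}(\mathbb{Z}_2[\mathbb{A}^n])=2^{2n}$ plus exhibiting $2^{2n}$ linearly independent operators of the form (multiplication)$\cdot$(derivative) inside $\mathrm{BDO}_n$ --- the execution differs in two genuine ways. The paper works with the basis $\{x^a\partial^b\}$ and proves independence by a minimality trick: it picks $c$ minimal with $\sum_a f(a,c)x^a\neq 0$ and evaluates on the single monomial $x^c$, where the formula $\partial^b x^c=x^{c\setminus b}$ for $b\subseteq c$ (and $0$ otherwise) kills all lower terms at once. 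You instead work with $\{m^a\partial^b\}$, evaluate on every $m^c$, and invert the resulting system $\sum_{b\supseteq s}D(a,b)=0$ by downward induction on $|s|$ --- a triangularity/M\"obius argument where the paper uses a single well-chosen test vector. Each route buys something: the paper's evaluation is shorter (one test monomial suffices, no induction), while yours is structurally cleaner in a way worth noting --- the paper's proof asserts without justification that $\{x^a\partial^b\}$ \emph{spans} $\mathrm{BDO}_n$ (which really requires normal-ordering arbitrary words in $x_i,\partial_i$ via the relations of Theorem \ref{rel}), whereas your argument needs no spanning claim at all: once the $4^n$ operators $m^a\partial^b$ are shown to lie in $\mathrm{BDO}_n$ and to be independent, they span the full $4^n$-dimensional endomorphism algebra, forcing $\mathrm{End}_{\mathbb{Z}_2}(\mathbb{Z}_2[\mathbb{A}^n])\subseteq\mathrm{BDO}_n$ directly, and the reverse inclusion is definitional. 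Your key evaluation $m^a\partial^b(m^c)=m^a$ iff $a+c\subseteq b$ agrees with the formula the paper records later when constructing the representation $\mathrm{R}$, so all the computational ingredients check out.
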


\begin{proof}
Note that $$\mathrm{dim}\left( \mathrm{End}_{\mathbb{Z}_2}(\mathbb{Z}_2[\mathbb{A}^n]) \right) \ = \
\mathrm{dim}(\mathbb{Z}_2[\mathbb{A}^n])\mathrm{dim}(\mathbb{Z}_2[\mathbb{A}^n])\ = \ 2^n2^n\ = \ 2^{2n}.$$ The set
$\{  x^a\partial^b \ | \ a,b \in \mathrm{P}[n] \}$ has $2^{2n}$ elements and generates $\mathrm{BDO}_n$
as a vector space over $\mathbb{Z}_2$; thus it is enough to show that it is a linearly independent set. Suppose that
$$\underset{{a,b \in \mathrm{P} [n]}} {\sum}f(a,b)x^a\partial^b\ = \ \sum_{b \in \mathrm{P} [n]}\left( \sum_{a \in \mathrm{P} [n]}f(a,b)x^a \right) \partial^b \ = \ 0.$$ Pick a minimal set $c \in \mathrm{P}[n]$  such that $\underset{{a \in \mathrm{P} [n]}} {\sum}f(a,c)x^a  \neq 0.$ We have that:
$$\left( \sum_{a, b \in \mathrm{P}[n]}f(a,b)x^a  \partial^b \right)(x^c)\ = \
\sum_{b \in \mathrm{P} [n]} \left( \sum_{a \in \mathrm{P} [n]}f(a,b)x^a \right) \partial^b (x^c) \ = \
\sum_{a \in \mathrm{P} [n]}f(a,c)x^a \ = \ 0.$$
Therefore, since $\{x^a \ | \ a \in \mathrm{P} [n] \}$ is a basis for $\mathbb{Z}_2[\mathbb{A}^n]$, we have that $f(a,c)=0$ in contradiction
with the fact $\underset{{a \in \mathrm{P} [n]}} {\sum}f(a,c)x^a  \neq 0.$ We conclude that $\mathrm{dim}\left(\mathrm{BDO}_n \right)=2^{2n}$
yielding the desired result.

\end{proof}

Putting together Proposition \ref{do} and Theorem \ref{egre} we get a couple of explicit ways of identifying $\mathrm{BDO}_n$ with
$\mathrm{M}_{2^n}(\mathbb{Z}_2)$, the algebra of square matrices of size $2^n$ with coefficients in $\mathbb{Z}_2.$ Note
that  $\mathrm{M}_{2^n}(\mathbb{Z}_2)$ may be identified with $\mathrm{M}(\mathrm{P}[n]\times \mathrm{P}[n], \mathbb{Z}_2).$
Moreover, we can identify $\mathrm{M}(\mathrm{P}[n]\times \mathrm{P}[n], \mathbb{Z}_2)$ with the set of directed graphs
with vertex set $\mathrm{P}[n]$ and without multiple edges as follows: given a matrix $M \in \mathrm{M}_{2^n}(\mathbb{Z}_2)$ its associated graph  has
an edge from $b$ to $a$ if and only if $M_{a,b}=1.$\\

Let $\mathrm{R}:\mathrm{BDO}_n \longrightarrow  \mathrm{M}_{2^n}(\mathbb{Z}_2)$ be the $\mathbb{Z}_2$-linear map constructed as follows.
Consider the bases $$\{  m^a\partial^b \ | \ a,b \in \mathrm{P} [n]  \} \ \  \mbox{for} \ \  \mathrm{BDO}_n \ \ \ \ \ \mbox{and} \ \ \ \ \
\{  m^a \ | \ a \in \mathrm{P}[n]  \} \ \  \mbox{for} \ \ \mathbb{Z}_2[\mathbb{A}^n].$$
For $a,b \in \mathrm{P}[n]$, let $\mathrm{R}(m^a\partial^b)$ be the matrix of $m^a\partial^b$ on the basis $m^a$.
The action of $m^a\partial^b$ on $m^c$ is given by $$m^a\partial^bm^c\ = \ m^a\sum_{e \subseteq b}m^{c + e}\ = \ \sum_{e \subseteq b}m^am^{c + e}\ = \ m^a \ \ \mbox{if} \ \ c+a \subseteq b \ \mbox{ and zero otherwise}.$$ Therefore, the matrix $\mathrm{R}(m^a\partial^b)$ is given
for $c,d \in \mathrm{P}[n]$ by the rule
    $$\mathrm{R}(m^a\partial^b)_{c,d}\ = \
\left\{\begin{array}{cc} 1 & \mathrm{if}\  c=a \ \mbox{and }\ d+a \subseteq b, \\
0 & \ \mathrm{otherwise.}  \end{array}\right. $$

\begin{exmp}{\em  The graph of $\mathrm{R}(m^{\{1,2\}}\partial^{\{2,3 \}})$ is show in Figure \ref{g1}.
}
\end{exmp}

\begin{figure}[ht]
\begin{center}
\includegraphics[height=3cm]{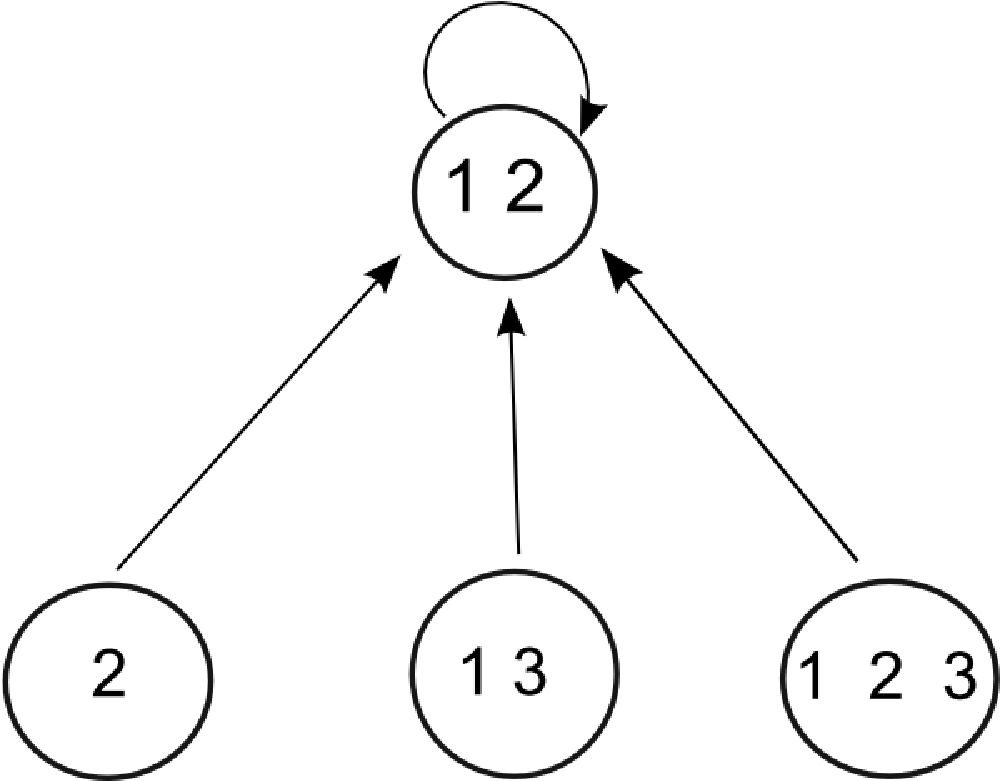}
\caption{ \ Graph of the matrix $\mathrm{R}(m^{\{1,2\}}\partial^{\{2,3 \}})$. \label{g1}}
\end{center}
\end{figure}

For a second representation consider the $\mathbb{Z}_2$-linear
map $\mathrm{S}:\mathrm{BDO}_n \longrightarrow  \mathrm{M}_{2^n}(\mathbb{Z}_2)$ constructed as follows.
Consider the bases $$\{  x^a\partial^b \ | \ a,b \in \mathrm{P} [n] \} \ \ \mbox{for} \ \  \mathrm{BDO}_n  \ \ \ \ \ \mbox{and} \ \ \ \ \
\{  x^a \ | \ a \in \mathrm{P}[n]  \} \ \ \mbox{for} \ \ \mathbb{Z}_2[\mathbb{A}^n].$$
For $a,b \in \mathrm{P}[n]$ let $\mathrm{S}(x^a\partial^b)$ be the matrix of $x^a\partial^b$ on the basis $x^a$.
The action of $x^a\partial^b$ on $x^c$ is given by
$$x^a\partial^bx^c\  = \ \left\{\begin{array}{cc} x^{a \cup c\setminus b} & \mathrm{if}\ b \subseteq c, \\
0 & \ \mathrm{otherwise.}  \end{array}\right.$$
Therefore, the matrix $\mathrm{S}(x^a\partial^b)$ is given for $c,d \in \mathrm{P}[n]$ by the rule
    $$\mathrm{S}(x^a\partial^b)_{c,d}\ = \
\left\{\begin{array}{cc} 1 & \mathrm{if}\  c=a \cup d\setminus b \ \mbox{and }\ b \subseteq d, \\
0 & \ \mathrm{otherwise.}  \end{array}\right. $$

\begin{exmp}{\em The graph associated to the matrix $\mathrm{S}(m^{\{1\}}\partial^{\{3 \}})$ is shown in Figure 2.}
\end{exmp}

\begin{figure}[ht]
\begin{center}
\includegraphics[height=3.3cm]{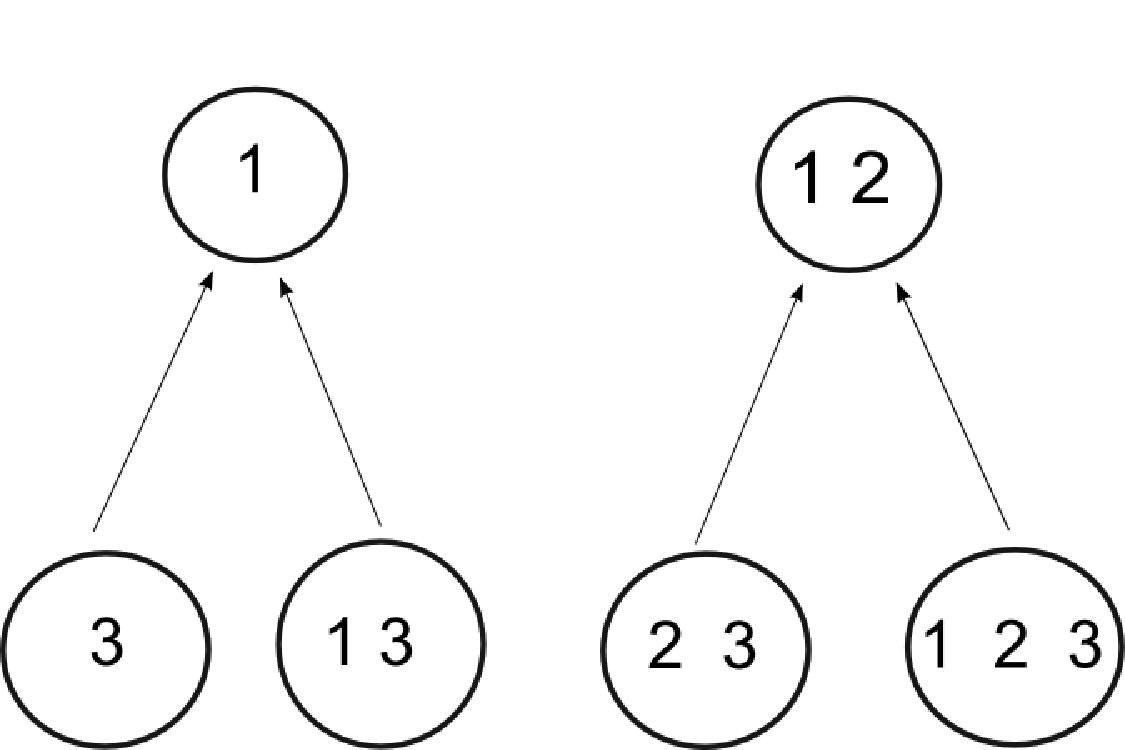}
\caption{ \ Graph of the matrix $\mathrm{S}(m^{\{1\}}\partial^{\{3 \}})$. \label{bra1}}
\end{center}
\end{figure}

\section{ Boole-Weyl Algebras}\label{bwa}

First we motivate, from the viewpoint of canonical quantization, our definition of Boole-Weyl algebras.
Canonical phase space, for a field $k$ of characteristic zero, can be identified with the affine space $k^n\times k^n$.
The Poisson bracket on $k[x_1,...,x_n, y_1,...,y_n]$ in canonical coordinates $x_1,...,x_n, y_1,...,y_n\ $ on $\ k^n\times k^n$
is given by $$\{ x_i , x_j \}=0,  \ \ \ \ \ \{ y_i , y_j \}=0,\ \ \ \ \   \{ x_i , y_j \}= \delta_{i,j}.$$
Equivalently, the Poisson bracket is given for  $f , g \in k[x_1,...,x_n, y_1,...,y_n]$  by
$$\{ f , g \}\ = \ \sum_{i=1}^n\frac{\partial f}{\partial x_i}\frac{\partial g}{\partial y_i} -
\frac{\partial f}{\partial y_i}\frac{\partial f}{\partial x_i}.$$
Canonical quantization may be formulated as the problem of promoting the commutative variables $x_i$ and $y_j$
into non-commutative operators $\ \widehat{x}_i \ $ and  $\ \widehat{y}_j\ $ satisfying the commutation relations:
$$ [\widehat{x}_i , \widehat{x}_j ]= 0, \ \ \ \ \  \ [ \widehat{y}_i  , \widehat{y}_j ]=0, \ \ \ \ \ \
[ \widehat{y}_i  , \widehat{x}_j ]= \delta_{i,j}.$$
Note that the free algebra generated  by
$\ \widehat{x}_i \ $ and  $\ \widehat{y}_j\ $ subject to the above relations is precisely what is called the Weyl algebra.\\

Now let $k=\mathbb{Z}_2$ and consider the affine phase spaces $\mathbb{Z}_2^n\times \mathbb{Z}_2^n$. Let $x_1,...,x_n, y_1,...,y_n$
be canonical coordinates on $\mathbb{Z}_2^n\times \mathbb{Z}_2^n$. The analogue of the Poisson bracket
$$\{ \ , \ \} : \mathbb{Z}_2[\mathbb{A}^{2n}] \otimes  \mathbb{Z}_2[\mathbb{A}^{2n}]\ \longrightarrow \ \mathbb{Z}_2[\mathbb{A}^{2n}]$$
can be expressed for $f,g \in \mathbb{Z}_2[\mathbb{A}^{2n}]$ as
$$\{ f , g \}\ = \ \sum_{i=1}^n\frac{\partial f}{\partial x_i}\frac{\partial g}{\partial y_i} +
\frac{\partial f}{\partial y_i}\frac{\partial f}{\partial x_i},$$
where $\frac{\partial }{\partial x_i}$ and $\frac{\partial }{\partial y_i}$ are the Boolean partial derivatives
along the coordinates $ x_i$ and $y_i.$
Clearly, the full set of axioms for a Poisson bracket will not longer hold, e.g.
Boolean derivatives are twisted derivations. Nevertheless, the bracket is still determined by its values on the
canonical coordinates: $\{ x_i , x_j \}=0, \ \ \{ y_i , y_j \}=0, \ \ \{ x_i , y_j \}= \delta_{i,j}.$
Canonical quantization consists in promoting the commutative variables $x_i$ and $y_j$
to non-commutative operators $\widehat{x}_i$ and $\widehat{y}_j$ satisfying the commutation relations:
$$ [\widehat{x}_i , \widehat{x}_j ]= 0, \ \ \ \ [ \widehat{y}_i , \widehat{y}_j  ]=0, \ \ \ \ [ \widehat{y}_i , \widehat{x}_j ]=0
\ \ \mbox{for} \ i\neq j, \ \ \ \mbox{and} \ \ \ \ \ \
[ \widehat{y}_i , \widehat{x}_i  ]_{s_i}= 1.$$
Note that in the last relation we did not use the commutator but the twisted commutator
$$[f,g]_{s_i}\ =\ fg \ + \ (s_if)g ;$$ this choice is expected since the operators $\widehat{y}_i$ are twisted
derivations instead of usual derivations.  The relation
$[ \widehat{y}_i , \widehat{x}_i  ]_{s_i}= 1$ can be equivalently written using commutators as
$$ [ \widehat{y}_i , \widehat{x}_i  ] \ = \ \widehat{y}_i +1.$$

We are ready to introduce the Boole-Weyl algebras $\mathrm{BW}_n$, which we also call quantum Boolean algebras.
The Boole-Weyl algebra $\mathrm{BW}_n$ is the  free algebra generated  by
$x_i$ and $y_j$ subject to the relations  above (removing the unnecessary hats). The algebras  $\mathrm{BW}_n$ are the analogue of the Weyl algebras in the Boolean context.

\begin{defn}
{\em The algebra $\mathrm{BW}_n$ is the quotient of $\mathbb{Z}_2<x_1,...,x_n, y_1,...,y_n>$,
the free associative $\mathbb{Z}_2$-algebra generated by $x_1,...,x_n, y_1,...,y_n$, by the ideal
$$<x_i^2 +x_i, \ x_ix_j + x_jx_i, \ y_iy_j + y_jy_i, \ y_i^2, \ y_ix_j + x_jy_i, \ y_ix_i + x_iy_i + y_i +1>,  $$
generated by the relations $x_i^2 =x_i, \ y_i^2=0, \ $ and $\ y_ix_i = x_iy_i + y_i+1 \ $ for $i \in [n], \ $
$x_ix_j = x_jx_i \ $ and $\ y_iy_j = y_jy_i$ for $i,j \in [n], \ $ and $\ y_ix_j + x_jy_i$ for $i \neq j \in [n].$
}
\end{defn}

\begin{thm}\label{qbas}{\em The map $ \mathbb{Z}_2<x_1,...,x_n, y_1,...,y_n> \ \longrightarrow \ \mathrm{End}_{\mathbb{Z}_2}(\mathbb{Z}_2[\mathbb{A}^n])$ sending $x_i$ to the operator of multiplication by $x_i$, and $y_i$ to $\partial_i$, descends to an
 isomorphism  $ \mathrm{BW}_n \longrightarrow \mathrm{End}_{\mathbb{Z}_2}(\mathbb{Z}_2[\mathbb{A}^n])$ of $\mathbb{Z}_2$-algebras.}
\end{thm}

\begin{proof}By Theorem \ref{rel} the given map descends. By definition it is a surjective
map $$ \mathrm{BW}_n \ \longrightarrow \ \mathrm{BDO}_n= \mathrm{End}_{\mathbb{Z}_2}(\mathbb{Z}_2[\mathbb{A}^n]).$$
Moreover, this map is an isomorphisms since $ \mathrm{dim}\left( \mathrm{BW}_n \right)=
\mathrm{dim}\left( \mathrm{End}_{\mathbb{Z}_2}(\mathbb{Z}_2[\mathbb{A}^n])\right)$. Indeed
using the commutation relations it is easy to check that the natural map
$$\mathbb{Z}_2[x_1,...,x_n]/<x_i^2 +x_i>\otimes \ \mathbb{Z}_2[y_1,...,y_n]/<y_i^2> \ \ \ \longrightarrow \ \ \ \mathrm{BW}_n$$
is surjective. If $\underset{{a, b \in \mathrm{P} [n]}} {\sum}f(a,b)x^a \otimes y^b$ is in the kernel of the latter
map, then the Boolean differential operator
$\underset{{a, b \in \mathrm{P} [n]}} {\sum}f(a,b)x^a \partial^b$
would vanish, and therefore the coefficients $f(a,b)$ must vanish as well.
Thus $$\mathrm{dim}(\mathrm{BW}_n) \ = \ \mathrm{dim}(\mathbb{Z}_2[x_1,...,x_n]/<x_i^2 +x_i>)\mathrm{dim}(\mathbb{Z}_2[y_1,...,y_n]/<y_i^2>)\ = \ 2^n2^n$$
$$=\ \mathrm{dim}(\mathbb{Z}_2[\mathbb{A}^n])\mathrm{dim}(\mathbb{Z}_2[\mathbb{A}^n])\ = \
\mathrm{dim}(\mathrm{End}_{\mathbb{Z}_2}(\mathbb{Z}_2[\mathbb{A}^n])).$$

\end{proof}

\begin{thm}{\em The map $ \mathbb{Z}_2<x_1,...,x_n, y_1,...,y_n> \  \longrightarrow \   \mathrm{End}_{\mathbb{Z}_2}(\mathbb{Z}_2[\mathbb{A}^n])$ sending $x_i$ to the operator of multiplication by $w_i=x_i +1$, and $y_i$ to the operator $\partial_i$, descends to an isomorphism  $ \mathrm{BW}_n \longrightarrow \mathrm{End}_{\mathbb{Z}_2}(\mathbb{Z}_2[\mathbb{A}^n])\ $ of $\ \mathbb{Z}_2$-algebras.}
\end{thm}

\begin{proof}
Follows from the fact that $w_i$ and $\partial_j$ satisfy exactly the same relation as $x_i$ and $\partial_j.$
\end{proof}

\begin{cor}\label{coro}{\em Any identity in $\mathrm{BW}_n$  involving $x_i$ and $\partial_j$ has an associated
identity involving $w_i$ and $\partial_j$ obtained by replacing $x_i$ by $w_i.$}
\end{cor}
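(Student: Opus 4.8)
The plan is to deduce the corollary formally from the two preceding theorems, which together supply two isomorphisms $\mathrm{BW}_n \rightarrow \mathrm{End}_{\mathbb{Z}_2}(\mathbb{Z}_2[\mathbb{A}^n])$: write $\Phi$ for the one sending $x_i$ to multiplication by $x_i$ and $y_i$ to $\partial_i$, and $\Psi$ for the one sending $x_i$ to multiplication by $w_i$ and $y_i$ to $\partial_i$. First I would pin down what an identity ``involving $x_i$ and $\partial_j$'' means: it is an equality $P = Q$ in $\mathrm{BDO}_n = \mathrm{End}_{\mathbb{Z}_2}(\mathbb{Z}_2[\mathbb{A}^n])$ between two noncommutative polynomial expressions in the operators of multiplication by $x_i$ and the Boolean derivatives $\partial_j$. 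Writing these as $P(x,\partial)$ and $Q(x,\partial)$ and pulling back along $\Phi^{-1}$, such an identity is the same datum as the relation $P(x,y) = Q(x,y)$ among the abstract generators $x_i, y_j$ of $\mathrm{BW}_n$.

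Having lifted the identity to $\mathrm{BW}_n$, the conclusion is immediate functoriality: apply the algebra homomorphism $\Psi$ to both sides. Being a ring map, $\Psi$ preserves the equality, so $\Psi(P(x,y)) = \Psi(Q(x,y))$; and since $\Psi(x_i)$ is multiplication by $w_i$ while $\Psi(y_j) = \partial_j$, this reads $P(w,\partial) = Q(w,\partial)$ in $\mathrm{BDO}_n$. This is exactly the associated identity obtained from the original by replacing each $x_i$ with $w_i$ and leaving the $\partial_j$ unchanged.

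The only genuine content lies in the first step — recognizing that an operator identity among the $x_i$ and $\partial_j$ is nothing but a relation among the abstract generators of $\mathrm{BW}_n$, which is precisely the information packaged by the isomorphism $\Phi$. Once this is granted, the remaining steps are pure functoriality and present no obstacle. It is illuminating to record the mechanism behind the scenes: the substitution $x_i \mapsto x_i + 1$, $y_i \mapsto y_i$ is an involutive automorphism $\theta$ of $\mathrm{BW}_n$, as one checks from $(x_i + 1)^2 = x_i^2 + 1 = x_i + 1$ and $y_i(x_i + 1) = (x_i + 1)y_i + y_i + 1$ in characteristic $2$; the two isomorphisms are related by $\Psi = \Phi \circ \theta$, so the corollary expresses that $\theta$ is intertwined through $\Phi$ with the substitution $x_i \mapsto w_i$ at the level of $\mathrm{BDO}_n$.
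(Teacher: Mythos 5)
Your proposal is correct and follows essentially the same route as the paper: the corollary is deduced immediately from the two isomorphisms $\mathrm{BW}_n \rightarrow \mathrm{End}_{\mathbb{Z}_2}(\mathbb{Z}_2[\mathbb{A}^n])$, whose existence rests on the observation that $w_i$ and $\partial_j$ satisfy the same defining relations as $x_i$ and $\partial_j$. Your explicit check that $x_i \mapsto x_i+1$, $y_i \mapsto y_i$ is an involutive automorphism $\theta$ with $\Psi = \Phi \circ \theta$ merely makes the paper's terse justification fully precise, and it is accurate.
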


\begin{lem}{\em For $a,b, c, d \in \mathrm{P}[n]$ the following identities hold in $\mathrm{BW}_n$ :
$$1. \ y^bm^c\ = \ \underset{{b_1 \subseteq b_2 \subseteq b}} {\sum}m^{c+b_2}y^{b_1}. \ \ \ \ \ \ \
2. \ m^ay^bm^cy^d\ = \ \underset{{\underset {e\setminus d \subseteq a+c \subseteq b }{d\subseteq e}}} {\sum}m^ay^{e}.
\ \ \ \ \ \ \ \ \ \ \ \ \ \ \ $$
$$3. \ y^bx^c\ =\ \underset{{k_1 \subseteq k_2 \subseteq b \cap c}} {\sum}x^{c\setminus k_2}y^{b\setminus k_1}. \ \ \ \ \ \
4. \ x^ay^bx^cy^d \ = \ \underset{{a \subseteq e,\ d \subseteq f}} {\sum}c(a,b,c,d,e,f)x^ey^f,$$
where
$$ c(a,b,c,d,e,f)\ = \
O\Big\{ k_1 \subseteq k_2 \subseteq b \cap c \ \  | \ \ a \cup ( c \setminus k_2) = e ,\  \  b \setminus k_1= f \setminus d \Big\}.$$
}

\end{lem}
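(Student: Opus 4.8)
The plan is to reduce all four identities to the two ``single-monomial'' relations (1) and (3), which say how a product $y^b$ of derivation generators moves to the right past one basis monomial $m^c$ or $x^c$; relations (2) and (4) then follow by left and right multiplication. Working inside $\mathrm{BW}_n$, the only inputs I need are $m^am^b=\delta_{ab}m^a$, $x^ax^b=x^{a\cup b}$, $y_i^2=0$, and, via the faithful representation of Theorem \ref{qbas}, the twisted Leibnitz rule $\partial_i(fg)=\partial_if\,g+s_if\,\partial_ig$ with $s_i=\partial_i+1$. The latter yields the single-generator moves I would record first: from $\partial_im^c=m^{c+e_i}+m^c$ and $s_im^c=m^{c+e_i}$,
$$y_im^c=m^{c+e_i}+m^c+m^{c+e_i}y_i,$$
while from $\partial_ix^c$ and $s_ix^c$ one gets $y_ix^c=x^cy_i$ when $i\notin c$ and $y_ix^c=x^{c\setminus i}+(x^c+x^{c\setminus i})y_i$ when $i\in c$.

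I would then prove (1) and (3) by induction on $|b|$, adjoining one index $i\notin b$ at a time through $y^{b\cup i}=y_iy^b$. Applying the relevant single-generator move to each summand of the inductive hypothesis produces three families of terms, which I would match against the target sum for $b\cup i$ split according to whether $i$ lies in the ``upper'' index ($b_2$, resp.\ $k_2$) and in the ``lower'' index ($b_1$, resp.\ $k_1$). In (1), for example, $y_im^{c+b_2}=m^{c+(b_2\cup i)}+m^{c+b_2}+m^{c+(b_2\cup i)}y_i$ together with $y_iy^{b_1}=y^{b_1\cup i}$ gives three terms realizing exactly the cases $i\notin b_2'$, $i\in b_2'\setminus b_1'$, and $i\in b_1'$; the index ranges line up so that each target term occurs once and the induction closes with no cancellation. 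The argument for (3) is the same in spirit, using the two cases of the $y_ix^c$ move and the fact that $k_2\subseteq b\cap c$ forces $i\notin k_2$.

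Finally I would obtain (2) and (4) by multiplication. For (2), substituting (1) into $m^a(y^bm^c)y^d$, the factor $m^am^{c+b_2}=\delta_{a,\,c+b_2}m^a$ forces $b_2=a+c$ (whence $a+c\subseteq b$), and $y^{b_1}y^d$ equals $y^{b_1\cup d}$ when $b_1\cap d=\emptyset$ and vanishes otherwise since $y_i^2=0$; putting $e=b_1\cup d$ reproduces the index set $d\subseteq e,\ e\setminus d\subseteq a+c\subseteq b$. For (4), substituting (3) into $x^a(y^bx^c)y^d$ and using $x^ax^{c\setminus k_2}=x^{a\cup(c\setminus k_2)}$ and $y^{b\setminus k_1}y^d=y^{(b\setminus k_1)\cup d}$, I set $e=a\cup(c\setminus k_2)$ and $f=(b\setminus k_1)\cup d$; since several pairs $(k_1,k_2)$ may now yield the same $(e,f)$, the coefficient of $x^ey^f$ becomes the mod-$2$ count $O\{k_1\subseteq k_2\subseteq b\cap c \mid a\cup(c\setminus k_2)=e,\ b\setminus k_1=f\setminus d\}$, which is exactly $c(a,b,c,d,e,f)$. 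The disjointness needed for $y^{b\setminus k_1}y^d\neq0$ is automatic, since $b\setminus k_1=f\setminus d$ is disjoint from $d$.

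The main obstacle I anticipate is the combinatorial bookkeeping in the inductive step for (1) and (3): matching the three families of terms to the three membership cases with the correct index ranges, and checking that nothing is double-counted. Once (1) and (3) are in hand, the passage to (2) and (4) is routine reindexing, the only delicate point being the appearance of the parity operator $O$ in (4), forced by the non-injectivity of $(k_1,k_2)\mapsto(e,f)$.
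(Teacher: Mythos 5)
Your proposal is correct, and its overall skeleton coincides with the paper's: establish the commutation identities (1) and (3) first, then obtain (2) and (4) by multiplying on the left and right and reindexing, with the parity coefficient in (4) arising exactly as you say from the non-injectivity of $(k_1,k_2)\mapsto(e,f)$. Where you differ is in how (1) and (3) themselves are proved. The paper proves (1) in closed form: it passes to the operator representation via Theorem \ref{qbas} and iterates the twisted Leibnitz rule to get $y^b f=\sum_{b_1\sqcup b_2=b}s^{b_2}y^{b_1}(f)\,y^{b_2}$, then expands $y^{b_1}(m^c)$ into shifts; and it proves (3) by an informal word-combinatorics argument, classifying for each index $i\in b\cap c$ the three outcomes of moving $y_i$ past $x_i$ (move, absorb, annihilate) and encoding them in the chain $k_1\subseteq k_2\subseteq b\cap c$. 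Your uniform induction on $|b|$, with the single-generator moves $y_im^c=m^{c+e_i}+m^c+m^{c+e_i}y_i$ and the two cases of $y_ix^c$, replaces both of these; the three families of terms in your inductive step correspond precisely to the paper's three outcomes, so your argument can be read as a rigorous formalization of the paper's sketch for (3), at the cost of the bookkeeping you flag. You are also more explicit than the paper on two minor points that it leaves implicit in its $\sqcup$ notation: that terms with $b_1\cap d\neq\emptyset$ (respectively $(b\setminus k_1)\cap d\neq\emptyset$) vanish because $y_i^2=0$, and that in (4) the surviving condition $b\setminus k_1=f\setminus d$ automatically enforces this disjointness. No gap; both routes are sound, with the paper's buying brevity via the closed-form Leibnitz expansion and yours buying a self-contained verification.
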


\begin{proof}1. By Theorem \ref{qbas} it is enough to show that the differential operators associated with
 both sides of the equation are equal. Consider the operator of multiplication by $f:\mathbb{Z}_2^n \longrightarrow \mathbb{Z}_2$ and let
$g:\mathbb{Z}_2^n \longrightarrow \mathbb{Z}_2$ be any other map. The twisted Leibnitz rule $$\partial_i(fg)\ = \ (\partial_if)g + (s_if)\partial_i g$$ can be
extended, since $s_i$ and $\partial_i$ commute, to the identity: $$\partial^b(fg)\ = \ \sum_{b_1 \sqcup b_2 = b}(s^{b_2}\partial^{b_1}f)\partial^{b_2}g, $$  thus the following identity holds in  $\mathrm{BW}_n:$
$$y^bf\ = \ \underset{{b_1 \sqcup b_2 = b}} {\sum}s^{b_2}(\partial^{b_1}f)y^{b_2} \ \ \ \ \ \mbox{for}\ \ \ \   f\in \mathbb{Z}_2[x_1,...,x_n].$$ In particular we obtain that
$$y^bm^c \ = \ \sum_{b_1 \sqcup b_2 \subseteq b}s^{b_2}s^{b_1}(m^c)y^{b_2}\ = \
\sum_{b_1 \sqcup b_2 \subseteq b}m^{c+ b_1 + b_2}y^{b_2}\ = \ \sum_{b_1 \subseteq b_2 \subseteq b }m^{c+b_2}y^{b_1}.$$

\noindent 2. We have that:
$$m^ay^bm^cy^d\ = \ \sum_{b_1 \subseteq b_2 \subseteq b }m^a m^{c+b_2}y^{b_1} y^d\ = \
\sum_{b_1 \subseteq b_2 \subseteq b }\delta_{a,c+b_2}m^ay^{b_1 \sqcup d}$$
$$= \ \sum_{b_1 \subseteq a+c \subseteq b }m^ay^{b_1\sqcup d}\ = \ \underset{{\underset {e\setminus d \subseteq a+c \subseteq b }{d\subseteq e}}} {\sum}m^ay^{e}.$$
where the last identity follows from the fact that $b_2=a+c$ and $e=b_1 \sqcup d$. \\

\noindent 3. From the relations $y_ix_j=x_jy_i$ for $i\neq j$ and $y_ix_i = x_iy_i +y_i+1$ we can argue as follows.
If a letter $y_i$ is placed just to the left of a $x_j$ we can move it to the right, since these letters commute. If instead we have a product
$y_ix_i$, then three options arises:

\begin{description}
  \item[a)] $y_i$ moves to the right of $x_i$;
  \item[b)] $y_i$ absorbs $x_i$;
  \item[c)] $x_i$ and $y_i$ annihilate  each other
leaving an $1$.
\end{description}

Call $k_1$ the set of indices for which c) occurs, and
$k_2$ the set of indices for which either b) or c) occur. Then  $k_1 \subseteq k_2 \subseteq b \cap c$
and the set for which option a) occurs is $b \cap c \setminus k_2$. Thus the desired identity is obtained.\\

\noindent 4. We have that:
$$x^ay^bx^cy^d \ = \ \underset{{k_1 \subseteq k_2 \subseteq b \cap c}} {\sum}x^{a \cup c\setminus k_2}y^{(b\setminus k_1) \sqcup d}\ = \
\underset{{a \subseteq e,\ d \subseteq f}} {\sum}c(a, b, c, d, e, f)x^ey^f,$$ where $$c(a, b, c, d, e, f)\ = \
O\Big\{  k_1 \subseteq k_2 \subseteq b \cap c  \  | \  a \cup ( c \setminus k_2) = e ,\  \  b \setminus k_1= f \setminus d \Big\}.$$
\end{proof}

\begin{exmp}{\em $$y^{\{1\}}m^{\{1\}}=m^{\{1\}} + m^{\emptyset}+m^{\{\emptyset\}}y^{\{1\}}; \ \ \ \ \ \ \ \ \
m^{\{1\}}y^{\{1\}}m^{\{1\}}y^{\{1\}} = m^{\{1\}}y^{\{1\}}; $$
$$y^{\{1\}}m^{\{1,2\}}=m^{\{1,2\}} +m^{\{2\}}+m^{\{2\}}y^{\{1\}}; \ \ \ \ \ \  \ \ \ m^{\{2\}}y^{\{1\}}m^{\{1,2\}}y^{\{1\}}=m^{\{2\}}y^{\{1\}};$$
$$y^{\{1,2\}}m^{\{1,2,3\}}\ = \ m^{\{1,2,3\}} + m^{\{2,3\}} + m^{\{1,3\}}+m^{\{1\}} +m^{\{2,3\}}y^{\{1\}}+m^{\{3\}}y^{\{1\}} $$
$$+m^{\{1,3\}}y^{\{2\}}+m^{\{3\}}y^{\{2\}}+m^{\{3\}}y^{\{1,2\}}; \ \ \ \ \ \ \ \ \ m^{\{3\}}y^{\{1,2\}}m^{\{1,2,3\}}y^{\{1\}}= m^{\{3\}}y^{\{1,2\}}.$$}
\end{exmp}

\begin{exmp}{\em For $i \in [k]$ assume given $A_i \in \mathrm{P}\mathrm{P}[n]\ $ and $\ f_i= \sum_{a \in A_i} y^{a}$. Then
$$f_1\cdots f_k \ = \ \sum_{b \in  \mathrm{P}[n]} O\{(a_1,...,a_k)\in A_1\times ... \times A_k \ | \ a_1\sqcup \cdots \sqcup a_k = b\  \}y^{b}.$$
In particular, for $A \in \mathrm{P}\mathrm{P}[n]$ and $f=\sum_{a \in A}y^a$, we get
that $$f^k\ = \ \sum_{b \in \mathrm{P}[n]}O\{a_1,...,a_k\in A \ | \ a_1\sqcup \cdots \sqcup a_k = b\  \} y^{b}.$$
For example, if $A = \mathrm{P}[n]$ then for $k \geq 2$ we have that:
 $$f^k\ = \ \sum_{b \in \mathrm{P}[n]}O\{a_1,...,a_k\in \mathrm{P}[n] \ | \ a_1\sqcup  \cdots \sqcup a_{k} =b\  \} y^{b}=
 \sum_{b \in \mathrm{P}[n]}(k^{|b|} \ \mbox{mod} \ 2 ) \ y^{b},$$
thus $\ f^k=f \ $ if $k$ is odd and $\ f^k=1\ $ if $k$ is even.

}
\end{exmp}

From  Lemma \ref{bases} we see that there are several natural basis for  $\mathrm{BW}_n$, namely:
$$\{ m^ay^b\ | \ a,b \in \mathrm{P}[n] \ \}, \ \ \ \ \ \{ x^ay^b \ | \ a,b \in \mathrm{P}[n]  \ \}, \ \ \ \ \  \{ w^ay^b \ | \ a,b \in \mathrm{P}[n] \ \} .$$

We write the coordinates of $f \in\mathrm{ BW}_n$ in these bases as:
$$f\ =\ \sum_{a,b \in \mathrm{P}[n]} f_m(a,b) m^ay^b\ = \ \sum_{a,b \in \mathrm{P}[n]} f_x(a,b) x^ay^b \ = \ \sum_{a,b \in \mathrm{P}[n] } f_w(a,b) w^ay^b.$$

These coordinates systems are connected by the relations: $$f_x(b, c)=\sum_{a \subseteq b} f_m(a, c), \ \ \ \ \ f_m(b, c)=\sum_{a \subseteq b} f_x(a, c),
\ \ \ \ \  f_w(b, c)=\sum_{a \subseteq b} f_m(\overline{a}, c),$$
$$f_m(b,c)=\sum_{a \subseteq \overline{b}} f_w(a, c), \ \ \ \ \  f_x(a, c)=\sum_{a \subseteq b} f_w(b, c) , \ \ \ \ \ f_w(a)=\sum_{a \subseteq b} f_x(b, c).$$

\begin{thm}\label{tt1}{\em For $f,g \in\mathrm{ BW}_n$ the following identities hold for $a,e,h \in \mathrm{P}[n]:$
\begin{enumerate}
\item $ \ \ (fg)_m(a,e)\ = \ \underset{{\underset {e\setminus d \subseteq a+c \subseteq b }{ b, c,  d \subseteq e}}} {\sum}f_{m}(a,b)g_{m}(c,d).$
\item $ \ \ (fg)_x(e,h)\ = \ \underset{{a \subseteq e, b,  c,  d \subseteq h}} {\sum}c(a,b,c,d,e,h)f_{x}(a,b)g_{x}(c,d), \ \mbox{where}$
$$c(a,b,c,d,e,h)\ = \ O\Big\{ k_1 \subseteq k_2 \subseteq b \cap c \ \  | \ \ a \cup ( c \setminus k_2) = e ,\  \  b \setminus k_1= h \setminus d \Big\}.$$
\end{enumerate}
}
\end{thm}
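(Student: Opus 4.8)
The plan is to reduce both identities to the product formulas already packaged in the preceding Lemma (the one computing $m^ay^bm^cy^d$ and $x^ay^bx^cy^d$), using nothing beyond bilinearity of multiplication in $\mathrm{BW}_n$ and the fact that the ground field is $\mathbb{Z}_2$. In each case I expand $f$ and $g$ in the relevant basis, distribute the product over the double sum, substitute the known four-letter product, and read off the coefficient of the target basis element.

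For part (1), I would write $f=\sum_{a,b}f_m(a,b)\,m^ay^b$ and $g=\sum_{c,d}g_m(c,d)\,m^cy^d$, so that $fg=\sum_{a,b,c,d}f_m(a,b)g_m(c,d)\,m^ay^bm^cy^d$. Substituting identity (2) of the preceding Lemma, every contributing term has the form $m^ay^e$ with the \emph{outer} label $a$ unchanged, since the left factor $m^a$ is never modified. Consequently the coefficient of $m^ay^e$ receives contributions only from terms of $f$ carrying that same first index $a$, which is exactly why the left-hand side reads $(fg)_m(a,e)$ with $a$ matching $f_m(a,b)$. Collecting this coefficient amounts to summing $f_m(a,b)g_m(c,d)$ over precisely those $b,c,d$ for which $m^ay^e$ occurs in the expansion of $m^ay^bm^cy^d$; reading the index set of identity (2), namely $d\subseteq e$ together with $e\setminus d\subseteq a+c\subseteq b$, produces the stated range.

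For part (2) the argument is identical in structure but starts from the $x^ay^b$ basis and uses identity (4). Expanding $fg=\sum_{a,b,c,d}f_x(a,b)g_x(c,d)\,x^ay^bx^cy^d$ and replacing each product by $\sum_{a\subseteq e,\ d\subseteq f}c(a,b,c,d,e,f)\,x^ey^f$, I fix the output labels $e$ and $h$, set $f=h$, and sum the weights $c(a,b,c,d,e,h)f_x(a,b)g_x(c,d)$ over the admissible $a,b,c,d$. Here the first index does grow (one has $a\subseteq e$ rather than equality), which is reflected in the summation range of the theorem. Since each $c(a,b,c,d,e,h)$ is already a single $\mathbb{Z}_2$-valued parity count and we work over $\mathbb{Z}_2$, distinct quadruples contributing to the same $x^ey^h$ simply add mod $2$, giving the displayed formula with the same coefficient $c$.

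The only delicate point is the bookkeeping of the summation ranges: one must verify that the constraints carried under the sums in identities (2) and (4), once the output labels $(a,e)$ respectively $(e,h)$ are fixed, translate into exactly the ranges written in the theorem, and that no term with a different outer label leaks into the coefficient being extracted. I expect no genuine obstacle beyond this index chase, as all the algebraic content is already contained in the preceding Lemma; Theorem \ref{tt1} is merely its bilinear closure.
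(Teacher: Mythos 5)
Your proposal is correct and follows essentially the same route as the paper's own proof: expand $f$ and $g$ in the $m^ay^b$ (resp.\ $x^ay^b$) basis, substitute identities (2) and (4) of the preceding Lemma for the four-letter products $m^ay^bm^cy^d$ and $x^ay^bx^cy^d$, and collect the coefficient of $m^ay^e$ (resp.\ $x^ey^h$), noting as you do that the first index is preserved in the $m$-basis but only contained ($a\subseteq e$) in the $x$-basis. The index bookkeeping you flag is precisely what the paper's final displays carry out, so there is no gap.
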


\begin{proof}
1. \ Let $\ f=\sum_{a,b \in \mathrm{P}[n]}f_{m}(a,b)m^ay^b, \ $ $\ g=\sum_{c,d \in \mathrm{P}[n]}g_{m}(c,d)m^cy^d,$  then we have that:
$$ fg\ = \ \sum_{a,b,c,d \in \mathrm{P}[n]}f_{m}(a,b)g_{m}(c,d) m^ay^bm^cy^d$$
$$= \ \sum_{a,b,c,d,e \in \mathrm{P}[n]}f_{m}(a,b)g_{m}(c,d)\underset{{\underset {e\setminus d \subseteq a+c \subseteq b }{d\subseteq e}}} {\sum}m^ay^{e}$$
$$= \ \sum_{ d \subseteq e, \ e\setminus d \subseteq a+c \subseteq b }f_{m}(a,b)g_{m}(c,d)m^ay^{e}.$$

\noindent 2. \ Let $f=\sum_{a,b \in \mathrm{P}[n]}f_{x}(a,b)x^ay^b\ $ and $\ g=\sum_{c,d \in \mathrm{P}[n]}g_{x}(c,d)x^cy^d, \ $  then we have that:
$$fg \ = \ \sum_{a,b,c,d \in \mathrm{P}[n]}f_{x}(a,b)g_{x}(c,d)x^ay^bx^cy^d$$
$$= \ \sum_{a,b,c,d \in \mathrm{P}[n]} \underset{{a \subseteq e,\ d \subseteq h}} {\sum}f_{x}(a,b)g_{x}(c,d)c(a,b,c,d,e,h)x^ey^h,$$
where $$c(a,b,c,d,e,h)\ = \ O\Big\{ k_1 \subseteq k_2 \subseteq b \cap c \ | \ a \cup ( c \setminus k_2) = e ,\  \  b \setminus k_1= h \setminus d \Big\}.$$
\end{proof}

\begin{exmp}{\em Let $\ x^ry^r= \sum_{a,b \in \mathrm{P}[n]}f_m(a,b)m^ay^b\ $ and $\ x^sy^s= \sum_{a,b \in \mathrm{P}[n]}g_m(a,b)m^ay^b$. Then
$$(fg)_m^2(a,e)\ = \ \underset{{\underset {e\setminus d \subseteq a+c \subseteq b }{ b, c,  d \subseteq e}}} {\sum}f_{m}(a,b)g_{m}(c,d).$$
For a non-vanishing summand we must have that $a=b=r$, $c=d=s$, and $s\subseteq e$. The conditions $e\setminus s \subseteq r+s \subseteq r$
implies that $s \subseteq r$ and $e\setminus s \subseteq r\setminus s$, thus $e \subseteq r $. We conclude that $(fg)_m^2(a,e)=1$ iff $s \subseteq r$, $a=r$ and $s \subseteq e \subseteq r.$ Thus $x^ry^rx^sy^s=0$ if $s\nsubseteq r$. For $s\subseteq r$ we get
$$x^ry^rx^sy^s \ = \ \sum_{s \subseteq e \subseteq r}x^ry^e.$$
In particular we get that $(x^ry^r)^n=x^ry^r$.}
\end{exmp}

\begin{exmp}{\em Let $\ f \ = \ \sum_{a,b \in \mathrm{P}[n]}m^ay^b\ = \ \sum_{a,b \in \mathrm{P}[n]}f_m(a,b)m^ay^b. \ $ We have that:
$$f_m^2(a,e)\ = \ \underset{{\underset {e\setminus d \subseteq a+c \subseteq b }{ b, c,  d \subseteq e}}} {\sum}1
\ \ = \ \ O\Big\{b,c,d \ | \ d \subseteq e, \ e\setminus d \subseteq a+c \subseteq b \Big\}.$$
Note that if $a+c$ is not equal to $[n]$, then there are an even number of choices for $b$, thus we
can assume that $c=\overline{a}$ and $b=[n].$ The condition $e\setminus d \subseteq a+c=[n]$ becomes
trivial, and therefore $f^2(a,e)= O\mathrm{P}[|e|]=0$ if
$e \neq \emptyset$ and  $f^2(a,e)= 1$ if $e = \emptyset$. Therefore we have that
$$f^2 \ = \ \sum_{a\in \mathrm{P}[n]}m^a.$$

}
\end{exmp}

\begin{exmp}{\em Let $\ r \ = \ \sum_{i \in[n]}x^{\{i\}}y^{\{i\}}\ = \ \sum_{a,b \in \mathrm{P}[n]} r_x(a,b) x^ay^b\in \mathrm{ BW}_n, \ $ then we have that:
$$r_x^2(e,f) \ = \  \underset{{a \subseteq e, b,  c,  d \subseteq f}} {\sum}c(a,b,c,d,e,f)r_{x}(a,b)r_{x}(c,d), \ \ \ \ \   \mbox{where}$$
$$c(a,b,c,d,e,f)\ = \ O\Big\{ k_1 \subseteq k_2 \subseteq b \cap c \ \  | \ \ a \cup ( c \setminus k_2) = e ,\  \  b \setminus k_1= f \setminus d \Big\}.$$
Clearly $|a|=|b|=|c|=|d|=1,$ $a=b,$ and $c=d.$ Moreover, we have that $|b \cap c| \leq 1$.
If $|b \cap c| =1$, then $a=b=c=d=e=\{ i\}$ for some $i \in [n].$ If $k_1= \emptyset$, then there are two options for
$k_2$ leading to a vanishing coefficient. Thus we may assume that $k_1 = k_2 = \{i\}$ and then necessarily
$f=\{i\}$. Thus we conclude that $r_x^2(\{ i\}, \{ i\})=1.$
If instead $|b \cap c|=0$, then $k_1 = k_2= \emptyset$,  $a \cup c = e$ and $b = f \setminus d.$
Let $i \neq j$ and suppose that $a=b= \{i\}$ and $c=d=\{j\}$. Then $e=f=\{i,j\}$ and $r_x^2(\{i,j\}, \{i,j\})=1$.
All together we conclude that
$$r^2\ = \ \sum_{i \in[n]}x^{\{i\}}y^{\{i\}} \ + \ \sum_{i\neq j \in[n]}x^{\{i,j\}}y^{\{i,j\}}.$$
}
\end{exmp}

\begin{exmp}{\em From Corollary \ref{coro} we see that if $\ s= \sum_{i \in[n]}w^{\{i\}}y^{\{i\}} \ $ then
$$s^2\ = \ \sum_{i \in[n]}w^{\{i\}}y^{\{i\}} \ + \ \sum_{i\neq j \in[n]}w^{\{i,j\}}y^{\{i,j\}}.$$
Equivalently, if $s\ = \ \sum_{i \in[n]}y^{\{i\}} \ + \ \sum_{i \in[n]}x^{\{i\}}y^{\{i\}} $ then
$$s^2\ = \ \sum_{i \in[n]}y^{\{i\}} + \sum_{i \in[n]}x^{\{i\}}y^{\{i\}}+ \sum_{i\neq j \in[n]}y^{\{i,j\}}
+ \sum_{i\neq j \in[n]}x^{\{i, j\}}y^{\{i,j\}} + \sum_{i\neq j \in[n]}\left(x^{\{i\}} + x^{\{j\}}\right)y^{\{i,j\}}.$$  }
\end{exmp}

\section{A Shifted Presentation}\label{s3}

So far, the operators  $\partial_i$ have played the main role. In this section we take an alternative viewpoint
and let the operators $s_i$ be the main characters. Recall that the Boolean partial derivatives and the Boolean shift operators are related by the
identities $y_i = s_i +1$ and $s_i = y_i +1$. For $a, b \in \mathrm{P} [n], \ $ set
$\ y^{a}=\underset{{i\in a}} {\prod}y_i \ \ $ and $\ \ s^{a}=\underset{{i\in a}} {\prod}s_i. \ \ $ We have that:
$$y^{b}\ = \ \prod_{i\in b}y_i=\prod_{i\in b}(s_i + 1)\ = \ \sum_{a \subseteq b}s^a, \ \ \mbox{and by the M} \ddot{\mbox{o}} \mbox{bius inversion
formula} \ \ s^{b}= \sum_{a \subseteq b}y^a.$$

\begin{prop}\label{dos}
{\em Consider maps $\ D:\mathrm{P}[n]\times \mathrm{P}[n] \longrightarrow \mathbb{Z}_2\ $ and $\ f:\mathrm{P}[n] \longrightarrow \mathbb{Z}_2$.\\

\noindent 1. Let $D=\underset{{a,b \in \mathrm{P}[n]}} {\sum}D(a,b)m^a s^b \in \mathrm{BDO}_n$,
$f=\underset{{c \in \mathrm{P} [n]}} {\sum}f(c)m^c \in \mathbb{Z}_2[\mathbb{A}^n]$, and
$Df=\underset{{a \in \mathrm{P}[n]}} {\sum}Df(a)m^a $. Then we have that $$Df(a)\ = \ {\underset{ b \in \mathrm{P}[n]}{\sum}}D(a,b)f(a+b). $$
\noindent 2. Let $D=\underset{{a,b \in \mathrm{P} [n]}} {\sum}D_x(a,b)x^as^b \in \mathrm{BDO}_n$, \
$f=\underset{{c \in \mathrm{P} [n]}} {\sum}f_x(c)x^c \in \mathbb{Z}_2[\mathbb{A}^n]$, \ and
$Df=\underset{{d \in \mathrm{P} [n]}} {\sum}Df_x(d)x^d$. Then
$$Df_x(d)\ = \ \underset{{\underset {a \cup (c\setminus e)=d}{a,\ e \subseteq  b \cap c}}} {\sum} D_x(a,b)f_x(c) .$$
}
\end{prop}

\begin{proof}
\ \\
\ \\
\noindent 1.  $Df\ = \ \underset{{a,b,c  \in \mathrm{P}[n]}} {\sum}D(a,b)f(c)m^as^bm^c\ = \
\underset{{a, b,c  }} {\sum}D(a,b)f(c)m^am^{b+c}\ = \ \underset{{a, b }} {\sum}D(a,b)f(a+b)m^a$ \\

$\ \ \ \  \ = \ \underset{a  \in \mathrm{P}[n]}{\sum} \left( \underset{b \in \mathrm{P}[n]}{\sum }D(a,b)f(a+b) \right)m^a.$ \\

\noindent 2. $Df \ = \ \underset{{a,b,c  \in \mathrm{P}[n]}} {\sum}D_x(a,b)f_x(c)x^as^bx^c\ = \
\underset{{a, e \subseteq  b \cap c  }} {\sum}D_x(a,b)f_x(c)x^{a \cup c \setminus e}$ \\

$\ \ \ \ \ =  \ \underset{d  \in \mathrm{P}[n]} {\sum}  \left(  \underset{{\underset {a \cup (c\setminus e)=d}{a, \ e \subseteq  b \cap c}}} {\sum} D_x(a,b)f_x(c)  \right)x^d.$

\end{proof}

Proposition \ref{dos} and Theorem \ref{egre} provide a couple of explicit ways of identifying $\mathrm{BDO}_n$ with
$\mathrm{M}_{2^n}(\mathbb{Z}_2)$ the algebra of square matrices of size $2^n$ with coefficients in $\mathbb{Z}_2.$
Consider the $\mathbb{Z}_2$-linear map $$\mathrm{R}:\mathrm{BDO}_n \ \longrightarrow \ \mathrm{M}_{2^n}(\mathbb{Z}_2)$$ sending
$m^as^b$ to $\mathrm{R}(m^as^b)$ the matrix of $m^as^b$ on the basis $m^a$.
The action of $m^as^b$ on $m^c$ is given by $m^as^bm^c=m^a$  if $c=a+b$ and $0$ otherwise.\\

Therefore, the matrix $\mathrm{R}(m^as^b)$ is given for $c,d \in \mathrm{P}[n]$ by the rule
$$\mathrm{R}(m^as^b)_{c,d}\ = \
\left\{\begin{array}{cc} 1 & \mathrm{if}\  c=a \ \mbox{and }\ d=a + b \\
0 & \ \mathrm{otherwise}  \end{array}\right. $$

\begin{exmp}{\em The graph of the matrix $\mathrm{R}(m^as^b)_{c,d}$ is shown in Figure \ref{g3}.
}
\end{exmp}

\begin{figure}[ht]
\begin{center}
\includegraphics[height=1cm]{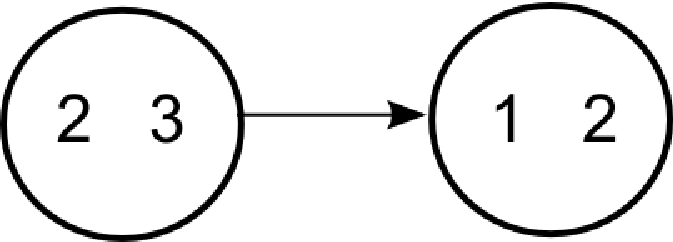}
\caption{ \ Graph of the matrix $\mathrm{R}(m^{\{1,2\}}\partial^{\{2,3 \}})$. \label{g3}}
\end{center}
\end{figure}

For a second representation consider $\mathbb{Z}_2$-linear
the map $\mathrm{S}:\mathrm{BDO}_n \rightarrow  \mathrm{M}_{2^n}(\mathbb{Z}_2)$ sending $x^as^b$
to $\mathrm{S}(x^as^b),$ the matrix of $x^as^b$ on the basis $x^a$.
The action of $x^as^b$ on $x^c$ is given by $$x^as^bx^c\ = \ \sum_{e \subseteq b \cap c}x^{a \cup c\setminus e}.$$
Therefore, the matrix $\mathrm{S}(x^as^b)$ is given for $c,d \in \mathrm{P}[n]$ by the rule
    $$\mathrm{S}(x^as^b)_{c,d} \ = \
\left\{\begin{array}{cc} 1 & \mathrm{if}\ \ \ O\{e \subseteq b\cap d \ | \ c = a \cup d\setminus e  \}\\
0 & \ \mathrm{otherwise}  \end{array}\right. $$

\begin{exmp}{\em
The graph of the matrix $\mathrm{S}(x^{\{1,2\}}s^{\{1,3 \}})$ is shown in Figure \ref{g4}.
}
\end{exmp}

\begin{figure}[ht]
\begin{center}
\includegraphics[height=2.3cm]{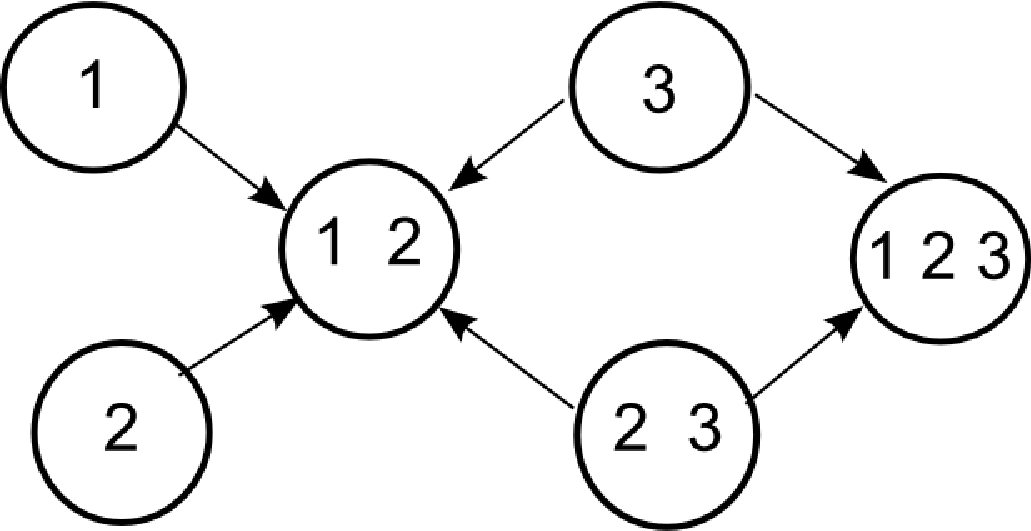}
\caption{ \ Graph of the matrix $\mathrm{R}(m^{\{1,2\}}\partial^{\{2,3 \}})$. \label{g4}}
\end{center}
\end{figure}

Next we introduce the shifted Boole-Weyl algebra $\mathrm{SBW}_n$, a quite useful and easy to handle presentation
for the algebra of Boolean differential operators $\mathrm{BDO}_n$.

\begin{defn}
{\em The algebra $\mathrm{SBW}_n$ is the quotient of $\mathbb{Z}_2<x_1,...,x_n, s_1,...,s_n>$,
the free associative $\mathbb{Z}_2$-algebra generated by $x_1,...,x_n, s_1,...,s_n$, by the ideal
$$<x_i^2 +x_i, \ x_ix_j + x_jx_i, \ s_is_j + s_js_i, \ s_i^2 +1, \ s_ix_j + x_js_i, \ s_ix_i + x_is_i + s_i>,  $$
generated by the relations $x_i^2 =x_i, \ s_i^2=1,$ and $s_ix_i = x_is_i + s_i$ for $i \in [n]$,
$x_ix_j = x_jx_i$ and $y_iy_j = y_jy_i$ for $i,j \in [n],$ and $s_ix_j + x_js_i$ for $i \neq j \in [n].$
}
\end{defn}

\begin{thm}{\em The map $ \mathbb{Z}_2<x_1,...,x_n, s_1,...,s_n> \ \longrightarrow \ \mathrm{End}_{\mathbb{Z}_2}(\mathbb{Z}_2[\mathbb{A}^n])$ sending $x_i$ to the operator of multiplication by $x_i$, and $s_i$ to the shift operator in the $i$-direction, descends to an isomorphism   $ \mathrm{SBW}_n \longrightarrow \mathrm{End}_{\mathbb{Z}_2}(\mathbb{Z}_2[\mathbb{A}^n])$  of $\mathbb{Z}_2$-algebras.}
\end{thm}

\begin{proof}
One can check that the map $ \mathbb{Z}_2<x_1,...,x_n, s_1,...,s_n> \ \longrightarrow \ \mathbb{Z}_2<x_1,...,x_n, y_1,...,y_n>$ sending
$x_i$ to $x_i$ and $s_i$ to $y_i+1$ descends to an algebra isomorphisms $\mathrm{SBW}_n \longrightarrow \mathrm{BW}_n$.
The result then follows from Theorem \ref{qbas}.
\end{proof}

\begin{thm}{\em The map $ \mathbb{Z}_2<x_1,...,x_n, s_1,...,s_n> \ \longrightarrow \
\mathrm{End}_{\mathbb{Z}_2}(\mathbb{Z}_2[\mathbb{A}^n])$ sending $x_i$ to the operator of multiplication by $w_i=x_i +1$, and $s_i$ to the shift operator in the $i$-direction, descends to an isomorphism  $ \mathrm{SBW}_n \ \longrightarrow \ \mathrm{End}_{\mathbb{Z}_2}(\mathbb{Z}_2[\mathbb{A}^n])$  of $\mathbb{Z}_2$-algebras.}
\end{thm}

\begin{proof}
Follows from the fact that $w_i$ and $s_j$ satisfy exactly the same relation as $x_i$ and $s_j.$
\end{proof}

\begin{cor}{\em Any identity in $\mathrm{SBW}_n$ involving $x_i$ and $s_j$  has an associated
identity involving $w_i$ and $s_j$ obtained by replacing $x_i$ by $w_i.$}
\end{cor}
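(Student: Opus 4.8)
The plan is to exploit the symmetry already established between the standard and shifted presentations, reducing everything to the previous corollary rather than recomputing from scratch. The key observation is that the final statement is the exact analogue, in the $\mathrm{SBW}_n$ setting, of Corollary \ref{coro} in the $\mathrm{BW}_n$ setting: both assert that the substitution $x_i \mapsto w_i = x_i + 1$ preserves all identities, because the second multiplication representation satisfies the same relations as the first.

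First I would invoke the two theorems immediately preceding the statement. The paper has just proved that the map sending $x_i$ to multiplication by $x_i$ and $s_i$ to the shift operator $s_i$ descends to an isomorphism $\mathrm{SBW}_n \to \mathrm{End}_{\mathbb{Z}_2}(\mathbb{Z}_2[\mathbb{A}^n])$, and separately that the map sending $x_i$ to multiplication by $w_i = x_i + 1$ (with $s_i$ again the shift) also descends to an isomorphism onto the same endomorphism algebra. The content of the corollary is a purely formal consequence: if $\phi$ and $\psi$ are the two isomorphisms $\mathrm{SBW}_n \to \mathrm{End}_{\mathbb{Z}_2}(\mathbb{Z}_2[\mathbb{A}^n])$, then $\psi^{-1}\circ\phi$ is an automorphism of $\mathrm{SBW}_n$ that fixes each $s_i$ and sends $x_i$ to $w_i$. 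An identity $P(x_i, s_j) = Q(x_i, s_j)$ in $\mathrm{SBW}_n$ is then carried by this automorphism to the identity $P(w_i, s_j) = Q(w_i, s_j)$, which is precisely the asserted associated identity.

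Concretely, the proof is one sentence: it follows from the fact that $w_i$ and $s_j$ satisfy exactly the same relations as $x_i$ and $s_j$, exactly as in the proof of Corollary \ref{coro}. The justification that $w_i = x_i+1$ and the shifts $s_j$ obey the defining relations of $\mathrm{SBW}_n$ is already contained in the theorem establishing the $w_i$-representation: one checks $w_i^2 = w_i$, $s_i^2 = 1$, commutation of the $w_i$ among themselves and with $s_j$ for $i \neq j$, and the shifted relation $s_i w_i = w_i s_i + s_i$. Each of these is a routine verification in $\mathrm{End}_{\mathbb{Z}_2}(\mathbb{Z}_2[\mathbb{A}^n])$ --- for instance $s_i w_i = s_i(x_i+1) = (x_i+1)s_i + s_i = w_i s_i + s_i$ using Theorem \ref{rel}(7) --- and these have already been granted by the preceding theorem.

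The main (and only) subtlety worth flagging is that one must confirm the substitution $x_i \mapsto w_i$ actually defines an algebra endomorphism of $\mathrm{SBW}_n$, i.e.\ that it respects the defining ideal; this is not automatic for an arbitrary substitution but is guaranteed here precisely because the images satisfy the same relations. There is no real obstacle: the heavy lifting was done in establishing the two isomorphism theorems, and the corollary merely repackages that symmetry as a transfer principle for identities. The proof is therefore a direct appeal to the same reasoning used for Corollary \ref{coro}.
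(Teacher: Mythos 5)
Your proposal is correct and follows essentially the same route as the paper, which derives the corollary immediately from the two preceding isomorphism theorems via the observation that $w_i$ and $s_j$ satisfy exactly the same relations as $x_i$ and $s_j$. Your explicit formulation of the transfer principle as the automorphism $\psi^{-1}\circ\phi$ of $\mathrm{SBW}_n$ (fixing each $s_i$ and sending $x_i$ to $w_i$) simply makes precise what the paper leaves implicit, and your verification of the relations, e.g.\ $s_iw_i = w_is_i + s_i$, is accurate.
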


\begin{lem}{\em For $a,b, c, d \in \mathrm{P}[n]$ the following identities hold in $\mathrm{SBW}_n$ :
$$1. \  s^bm^c\ = \ m^{b+c}s^b. \ \ \ \ \ \ \ \ \ \ \ \ \ \ \ \ 2. \ m^as^bm^cs^d\ = \ \delta_{a, b+c}m^as^{b+d}.\ \ $$
$$3. \ s^bx^c\ = \ \sum_{k \subseteq b\cap c} x^{c\setminus k}s^b.  \ \ \ \ \ \ \ \ \ \ \ 4. \ x^as^bx^cs^d \ = \ \sum_{e \subseteq b\cap c} x^{a \cup c\setminus e}s^{b+d}.$$
}
\end{lem}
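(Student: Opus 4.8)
The plan is to reduce all four identities to the operator picture supplied by the isomorphism $\mathrm{SBW}_n \cong \mathrm{End}_{\mathbb{Z}_2}(\mathbb{Z}_2[\mathbb{A}^n])$, under which $x^a$ acts as multiplication by the monomial $x^a$ and $s^b$ acts as the iterated shift $s^bf(x)=f(x+b)$, where $b\in\mathrm{P}[n]$ is identified with the corresponding point of $\mathbb{Z}_2^n$. Two facts will be used repeatedly: the shifts commute and square to the identity, so $s^bs^d=s^{b+d}$ (symmetric difference), and the defining relations $s_ix_i=(x_i+1)s_i$ and $s_ix_j=x_js_i$ for $i\neq j$ let one push shifts past multiplications one letter at a time. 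Each identity can then be checked either by evaluating both sides on an arbitrary $g\in\mathbb{Z}_2[\mathbb{A}^n]$, or by the equivalent algebraic commutation argument.

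For (1) I would compute $(s^bm^cg)(x)=m^c(x+b)g(x+b)$ and note that $m^c(x+b)=1$ exactly when $x=b+c$, so $m^c(x+b)=m^{b+c}(x)$; hence $s^bm^c=m^{b+c}s^b$. Identity (2) is then a short chain: rewrite $s^bm^c=m^{b+c}s^b$ via (1), collapse $s^bs^d=s^{b+d}$, and apply $m^am^{b+c}=\delta_{a,b+c}m^a$ from Lemma \ref{bases}(8), giving $m^as^bm^cs^d=\delta_{a,b+c}m^as^{b+d}$.

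The combinatorial core is (3). I would evaluate $x^c$ at the shifted point: since $(x+b)_i=x_i+b_i$, the factor attached to an index $i\in c$ is $x_i+1$ when $i\in b\cap c$ and $x_i$ otherwise, so $x^c(x+b)=\prod_{i\in b\cap c}(x_i+1)\prod_{i\in c\setminus b}x_i=\sum_{k\subseteq b\cap c}x^{c\setminus k}(x)$, the sum arising by choosing, for each $i\in b\cap c$, the $x_i$ term or the $1$ term, with $k$ the set of indices where $1$ is chosen. Feeding this into $(s^bx^cg)(x)=x^c(x+b)g(x+b)$ gives $s^bx^c=\sum_{k\subseteq b\cap c}x^{c\setminus k}s^b$. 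The same formula drops out purely algebraically by moving $s^b$ rightward through $x^c$, since $s_ix_i=(x_i+1)s_i$ introduces exactly the binary choice on each index of $b\cap c$.

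Finally (4) follows by substituting (3) into $x^as^bx^cs^d$, then using $x^ax^{c\setminus e}=x^{a\cup(c\setminus e)}$ from Lemma \ref{bases}(9) together with $s^bs^d=s^{b+d}$. I expect no genuine obstacle in this argument; the only point that demands care is the index bookkeeping in (3), namely checking that deleting $k\subseteq b\cap c$ from $c$ yields precisely $(c\setminus b)\cup((b\cap c)\setminus k)$, which is what makes the expansion collapse to a clean sum of the monomials $x^{c\setminus k}$.
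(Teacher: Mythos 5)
Your proposal is correct and follows essentially the same route as the paper: identities (1), (2), and (4) are proved by the identical evaluation-and-collapse computations, and for (3) your pointwise expansion of $x^c(x+b)$ over $k\subseteq b\cap c$ is just the evaluation form of the paper's one-line argument that each $s_i$ passing an $x_i$ with $i\in b\cap c$ either absorbs it or not (an argument you also state as your algebraic alternative). No gaps; the index bookkeeping $c\setminus k=(c\setminus b)\cup((b\cap c)\setminus k)$ that you flag is exactly right.
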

\begin{proof}
1. For any $f \in \mathbb{Z}_2[\mathbb{A}^n]$ we have that:  $$(s^b m^c f)(x)\ =\ m^c(x +b)f(x+b)\ = \ m^{b+c}(x)f(x+b)\ = \ m^{b+c}s^bf(x),
\ \mbox{thus} \  s^b m^c \ = \ m^{b+c}s^b.$$

\noindent 2. $m^as^bm^cs^d\ = \ m^am^{b+c}s^bs^d\ = \ \delta_{a, b+c}m^{b+c}s^{b+d}.$\\

\noindent 3. From the identity $s_ix_i=x_is_i +s_i$ we see that as $s_i$ pass to the right of $x_i$, it may or may not absorb $x_i$. The set $k\subseteq b\cap c$
is the set of indices for which $x_i$ is absorbed by $s_i$.\\

\noindent 4. $x^as^bx^cs^d \ = \ \sum_{e \subseteq b\cap c} x^ax^{c\setminus e}s^bs^d\ = \ \sum_{e \subseteq b\cap c} x^{a \cup c\setminus e}s^{b+d}.$

\end{proof}

\begin{exmp}{\em } $$1. \  s^{[n]}m^c\ = \ m^{\overline{c}}s^{[n]}, \ \ \ \overline{c}=[n]\setminus c.   \ \  \ \ \ \ \ \ \ \ \
2. \ m^{\overline{c}}s^{[n]}m^cs^d \ = \ m^{\overline{c}}s^{\overline{d}}.$$
$$3. \ s^{[n]}x^c\ = \ \sum_{k \subseteq  c} x^{k}s^{[n]}. \ \ \ \ \ \ \ \ \ \ \ \ \ \ \ \ \ \ \ \
4. \ x^as^{[n]}x^cs^d \ = \ \sum_{k \subseteq  c} x^{a \cup k}s^{\overline{d}}.$$
\end{exmp}

From  Lemma \ref{bases} we see that there are several natural basis for  $\mathrm{BW}_n$, namely:
$$\{ m^as^b\ | \ a,b \in \mathrm{P}[n] \ \}, \ \ \ \ \ \{ x^as^b \ | \ a,b \in \mathrm{P}[n]  \ \}, \ \ \ \ \ \{ w^as^b \ | \ a,b \in \mathrm{P}[n]\ \} .$$

We write the coordinates of $f \in\mathrm{SBW}_n$ in these bases as:
$$f\ = \ \sum_{a,b \in \mathrm{P}[n] } f_{m,s}(a,b) m^as^b \ = \ \sum_{a,b \in \mathrm{P}[n] } f_{x,s}(a,b) x^as^b \ = \ \sum_{a,b \in \mathrm{P}[n]} f_{w,s}(a,b) w^as^b.$$

These coordinates systems are connected by the relations: $$f_{x,s}(b, c)\ = \ \sum_{a \subseteq b} f_{m,s}(a, c), \ \ \ \ f_{m,s}(b, c)\ = \ \sum_{a \subseteq b} f_{x,s}(a, c),
\ \ \ \ f_{w,s}(b, c)\ = \ \sum_{a \subseteq b} f_{m,s}(\overline{a}, c),$$
$$f_{m,s}(b,c)\ = \ \sum_{a \subseteq \overline{b}} f_{w,s}(a, c), \ \ \ \ f_{x,s}(a, c)\ = \ \sum_{a \subseteq b} f_{w,s}(b, c) , \ \ \ \ f_{w,s}(a)\ = \ \sum_{a \subseteq b} f_{x,s}(b, c).$$

\begin{thm}\label{tt2}{\em For $f,g \in \mathrm{SBW}_n$ the following identities hold for $a,b,e,h \in \mathrm{P}[n]$:
\begin{enumerate}
\item $(fg)_{m,s}(a,b)\ = \ \underset{{c \in \mathrm{P}[n]}} {\sum}f_{m,s}(a,c)g_{m,s}(a+c,b+c).$
\item $(fg)_{x,s}(e,h)\ = \ \sum_{a \subseteq e, \ b,c \in \mathrm{P}[n] }
O\Big\{k \subseteq  b\cap c \ | \ a \cup c\setminus k =e  \ \Big\}f_{x,s}(a,b)g_{x,s}(c,b+h).$
\end{enumerate}
}
\end{thm}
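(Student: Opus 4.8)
The plan is to mirror the proof of Theorem \ref{tt1}: expand $fg$ bilinearly in the chosen basis, apply the matching product rule from the preceding Lemma to rewrite each term $(\text{basis})\cdot(\text{basis})$ as a combination of basis elements, and then collect the coefficient of a fixed target basis element. All of the work is pushed into identities $2$ and $4$ of that Lemma, so the argument is bookkeeping in characteristic $2$.

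For part $1$, I would write $f=\sum_{a,c}f_{m,s}(a,c)m^as^c$ and $g=\sum_{c',d}g_{m,s}(c',d)m^{c'}s^d$, so that
$$fg=\sum f_{m,s}(a,c)\,g_{m,s}(c',d)\,m^as^c m^{c'}s^d.$$
Applying identity $2$ of the Lemma, namely $m^as^cm^{c'}s^d=\delta_{a,\,c+c'}\,m^as^{c+d}$, collapses the $c'$-sum to the single value $c'=a+c$, giving $fg=\sum_{a,c,d}f_{m,s}(a,c)g_{m,s}(a+c,d)\,m^as^{c+d}$. Reading off the coefficient of $m^as^b$ then forces $c+d=b$, i.e. $d=b+c$, which yields exactly $(fg)_{m,s}(a,b)=\sum_{c}f_{m,s}(a,c)g_{m,s}(a+c,b+c)$.

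For part $2$ the same strategy applies with $x^as^b$ replacing $m^as^b$, now using identity $4$ of the Lemma, $x^as^bx^cs^d=\sum_{k\subseteq b\cap c}x^{a\cup(c\setminus k)}s^{b+d}$. Substituting into the bilinear expansion and extracting the coefficient of $x^es^h$ imposes $b+d=h$ (so $d=b+h$) on the shift part and $a\cup(c\setminus k)=e$ on the multiplication part. Because several subsets $k\subseteq b\cap c$ can produce the same monomial $x^e$, each coincidence contributes $1$, and in characteristic $2$ these contributions collapse to the parity $O\{k\subseteq b\cap c \mid a\cup(c\setminus k)=e\}$; the relation $a\cup(c\setminus k)=e$ moreover forces $a\subseteq e$, recovering the stated summation range.

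The computations are routine once the Lemma is available; the only point requiring discipline is the characteristic-$2$ indexing. The main subtlety is that, since shift operators combine as $s^bs^d=s^{b+d}$, the second shift index of $g$ that feeds a target shift is $b+h$ in part $2$ (respectively $b+c$ in part $1$) rather than the target shift itself, and that the parity operator $O$ in part $2$ is precisely the count, mod $2$, of the subsets $k$ collapsing to a common monomial. No step presents a genuine obstacle beyond keeping this arithmetic straight.
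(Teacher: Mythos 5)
Your proposal is correct and follows essentially the same route as the paper: both parts expand $fg$ bilinearly in the $m^as^b$ (resp. $x^as^b$) basis, apply identities 2 and 4 of the preceding Lemma, and read off coefficients using the characteristic-$2$ substitutions $c'=a+c$, $d=b+c$ (resp. $d=b+h$), with the parity operator $O$ absorbing the multiple subsets $k\subseteq b\cap c$ producing the same monomial. Your remarks on the reindexing of the shift exponents and on $a\cup(c\setminus k)=e$ forcing $a\subseteq e$ match the paper's computation exactly.
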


\begin{proof}
1. Let $f=\sum_{a,b \in \mathrm{P}[n]}f_{m,s}(a,b)m^as^b,$ $\ g=\sum_{c,d \in \mathrm{P}[n]}g_{m,s}(c,d)m^cs^d,$  then we have that:
$$ fg\ = \ \sum_{a,b, c,d \in \mathrm{P}[n]}f_{m,s}(a,b)g_{m,s}(c,d) m^as^bm^cs^d$$
$$= \ \sum_{b,c,d \in \mathrm{P}[n]} f_{m,s}(b+c,b)g_{m,s}(c,d) m^{b+c}s^{b+d}$$
$$= \ \sum_{e,f \in \mathrm{P}[n]} \left( \underset{{\underset {b+d=f }{b+c=e}}} {\sum}f_{m,s}(b+c,b)g_{m,s}(c,d)\right) m^{e}s^{f}$$
$$= \ \sum_{e,f \in \mathrm{P}[n]} \left( \underset{{b \in \mathrm{P}[n]}} {\sum}f_{m,s}(e,b)g_{m,s}(e+b,f+b)\right) m^{e}s^{f}.$$

\

\noindent 2. Let $f=\sum_{a,b \in \mathrm{P}[n]}f_{x,s}(a,b)x^as^b, \ $ $\ g=\sum_{c,d \in \mathrm{P}[n]}g_{x,s}(c,d)x^cs^d,$  then we have that:
$$fg\ = \ \sum_{a,b, c,d \in \mathrm{P}[n]} f_{x,s}(a,b)g_{x,s}(c,d) x^as^bx^cs^d$$
$$= \ \sum_{a,b,c,d \in \mathrm{P}[n], \ k \subseteq b\cap c} f_{x,s}(a,b)g_{x,s}(c,d) x^{a \cup c\setminus k}s^{b+d}$$
$$= \ \sum_{e,h \in \mathrm{P}[n]}\left(\underset{ { \underset {\underset{a \cup c\setminus k =e, \ b+d=h } {k \subseteq b\cap c} }{a,b,c,d \in \mathrm{P}[n]} } } {\sum} f_{x,s}(a,b)g_{x,s}(c,d) \right) x^{e}s^{h}$$
$$= \ \sum_{e,h \in \mathrm{P}[n]}\left(\underset{ { \underset {\underset{a \cup c\setminus k =e } {k \subseteq b\cap c} }{a \subseteq e, \ b,c \in \mathrm{P}[n]} } } {\sum} f_{x,s}(a,b)g_{x,s}(c,b+h) \right) x^{e}s^{h}$$
$$= \ \sum_{e,h \in \mathrm{P}[n]}\left(\sum_{a \subseteq e, \ b,c \in \mathrm{P}[n] }
O\Big\{k \subseteq  b\cap c \ | \ a \cup c\setminus k =e  \ \Big\}f_{x,s}(a,b)g_{x,s}(c,b+h) \right) x^{e}s^{h}.$$
\end{proof}

\begin{exmp}{\em  Suppose that $f=\sum_{a,b \in \mathrm{P}[n]}f_{m,s}(a,b)m^as^b\ $ and  $\ \ g=\sum_{c,d \in \mathrm{P}[n]}g_{m,s}(c,d)m^cs^d$
are actually regular functions on $\mathbb{Z}_2^n$, i.e. $f_{m,s}(a,b)=0$ if $b \neq \emptyset$, and $g_{m,s}(c,d)=0$ if $d \neq \emptyset$.
A non-vanishing term in the expression $$(fg)_{m,s}(a,b)\ = \ \underset{{c \in \mathrm{P}[n]}} {\sum}f_{m,s}(a,c)g_{m,s}(a+c,b+c)$$
must have $c = \emptyset$, and then we must also have that $c= \emptyset + c=\emptyset$, and $a+c=a+\emptyset=a.$ Thus in this case the product $fg$
is, as expected, just the pointwise product of  functions on $\mathbb{Z}_2^n$.
}
\end{exmp}

\begin{exmp}{\em  Let $f=\sum_{a,b \in \mathrm{P}[n]}f_{m,s}(a,b)m^as^b,$ and  suppose that $g=\sum_{c,d \in \mathrm{P}[n]}g_{m,s}(c,d)m^cs^d$
is such that $g_{m,s}(c,d)=0$ if $c\neq [n].$ Then a non-vanishing summand in the formula
$$(fg)_{m,s}(a,b)\ = \ \underset{{c \in \mathrm{P}[n]}} {\sum}f_{m,s}(a,c)g_{m,s}(a+c,b+c)$$
can only arise for $c=\overline{a}$. Therefore $(fg)_{m,s}(a,b)= f_{m,s}(a,\overline{a})g_{m,s}([n],b+\overline{a}).$
For example, we have that
$$\left( \sum_{a \in \mathrm{P}[n]}m^as^{\overline{a}} \right)\left( \sum_{d \in \mathrm{P}[n]}m^{[n]}s^d \right)\ = \ \sum_{a,b \in \mathrm{P}[n]}m^as^b.$$ As another example consider $f=\sum_{a,b \in \mathrm{P}[n]}m^as^b$
and $g=m^{[n]}s^{[n]}$. In this case we get that:
$$\left(\sum_{a,b \in \mathrm{P}[n]}m^as^b\right)\left( m^{[n]}s^{[n]} \right)\ = \ \sum_{a\in \mathrm{P}[n]}m^as^a.$$
}
\end{exmp}

\section{Quantum Operational Logic}\label{s5}

In this section we study quantum Boolean algebras from a logical viewpoint. Propositional logic may be approach from a myriad of viewpoints. Here we take a revisionist approach bias towards the theory of operads and props. We believe this approach may be of interest in itself, and is certainly pretty convenient for our current purposes as it would readily generalize to cover quantum operational logic. We assume the
reader to be familiar with the language of operads and props \cite{boa, DR, g, markl, m}. First we review
the basic principles of classical propositional logic \cite{bro} which may be summarized as:

\begin{itemize}

\item On the syntactic side,  propositions are words in a certain language. Propositions are either simple or composite.
Let $x$ be the finite set of simple propositions, and $\mathbb{P}(x)$ be the
set of all propositions. Composite propositions are obtained from the simple propositions using the logical connectives. There are several
options for the choice of connectives, the most common ones being $\vee, \wedge, \rightarrow, \neg.$

\item On the semantics side, a truth function $\widehat{p}:\mathbb{Z}_2^x  \longrightarrow  \mathbb{Z}_2$ is associated to each proposition $p \in \mathbb{P}(x)$, where
$\mathbb{Z}_2^x$ is the set of maps from $x$ to $\mathbb{Z}_2$.
The map $$\mathbb{P}(x) \ \longrightarrow \ \mathbb{Z}_2[\mathbb{A}^x]$$
sending a proposition $p$ to its truth function $\ \widehat{p}\in \mathbb{Z}_2[\mathbb{A}^x]=\mathrm{M}(\mathbb{Z}_2^x,\mathbb{Z}_2)\ $ is such that:

\begin{itemize}
  \item $\widehat{a}\ $ is evaluation at $a, \ $ i.e. $\ \widehat{a}f=f(a)\ $ for $\ a \in x,\ $ and $\ f\in \mathbb{Z}_2^x. $
  \item $\widehat{p \vee q} = \widehat{p} \vee \widehat{p}, \ \ \ \ \ \ \
\widehat{p \wedge q} \ = \ \widehat{p} \wedge \widehat{p}, \ \ \ \ \ \ \ \widehat{p \rightarrow q} \ = \ \widehat{p} \rightarrow \widehat{p}, \ \ \ \ \ \ \ \widehat{ \neg p } \ = \ \neg \widehat{p},$\\
 where the action of the connectives on truth functions comes from the corresponding
operations on $\mathbb{Z}_2.$
\end{itemize}

\end{itemize}

The map $\ \mathbb{P}(x) \longrightarrow \mathbb{Z}_2[\mathbb{A}^x]\ $ is surjective, and there is a systematic procedure
to tell when two propositions have the same associated truth function. \\

For our purposes, it is convenient to
describe $\mathbb{P}(x)$ using the binary connectives product $.$ and sum $+$, and the constants $0,1.$ \\

In  logical terms  the product $.$ is the
logical conjunction,  $+$ is the exclusive or, and $0$ and $1$ represent falsity and truth, respectively.\\

$\mathbb{P}(x)$ is defined recursively as the set of words in the symbols
$a \in x,., +,0,1,(,)$  such that:
\begin{itemize}
\item $x\subseteq \mathbb{P}(x), \ $ $0 \in \mathbb{P}(x), \ $ and $\ 1 \in \mathbb{P}(x).$
\item If $p,q \in \mathbb{P}(x),$ then $(pq)\ $ and $\ (p+q)$ are also in $\mathbb{P}(x)$.
\end{itemize}

We defined recursively the notion of sub-words in $\mathbb{P}(x)$. For all $p,q,r \in \mathbb{P}(x)$ set:
\begin{itemize}
  \item $p$ is a sub-word of $p$;
  \item $p$ is a sub-word of $(pq)$ and $(p+q)$;
  \item if $p$ is a sub-word of $q$ and $q$ is a sub-word $r$, then $p$ is a sub-word of $r$.
\end{itemize}

Next we define an equivalence relation $\mathbb{R}(x)$, also denoted by $\sim$, on $\mathbb{P}(x)$. Given $p,q \in \mathbb{P}(x)$ we set:
 $$p \ \mathbb{R}(x) \ q \ \ \ \ \mbox{if and only if} \ \ \ \ \widehat{p}=\widehat{q}.$$ The relation
$ \mathbb{R}(x)$ can be defined in syntactic terms as follows. Propositions  $p$ and $q$ are related if and only if either $p=q$ or there exists a sequence
$p_1,...,p_k$, for some $k \geq 1$,  such that $p_1=p$, and $p_k=q$, and $p_{i+1}$ is obtained from $p_i$ by replacing
a sub-word of $p_i$ by an equivalent word according to the following relations valid for all propositions $p,q,r \in \mathbb{P}(x):$

\begin{itemize}
\item Associativity and commutativity for $.$ and $+$:
$$ \ p(qr) \ \sim \ (pq)r,\ \ \ \ \ pq\ \sim \ qp, \ \ \ \ \ (p+q)+r \ \sim \ p+(q+r), \ \ \ \ \  p+q \ \sim \ q+p.$$
\item Distributivity: $\ p(q+r)\ \sim \ pq+pr.$
\item Additive and multiplicative units:  $\ 0+ p \ \sim \ p \ \ \ $ and  $\ \ \ 1p \ \sim \ p$.
\item Additive nilpotency: $\ p+p \  \sim \ 0$.
\item Multiplicative idempotency: $\ pp \ \sim \ p$.
\end{itemize}

Let $\mathrm{Set}$ be the category of sets, and $\mathrm{set}$ be the full subcategory of finite sets. Let
$$\mathbb{Z}_2^{(\ )}: \mathrm{set}^{\circ}\ \longrightarrow \ \mathrm{set}$$
be the functor sending a finite set $x$ to the free Boolean algebra generated by $x$, i.e.
$$\mathbb{Z}_2^x \ \simeq \  \mathrm{P}(x);$$
and sending a map $\ f:x \longrightarrow y\ $ to the map $ \ \mathbb{Z}_2^y \longrightarrow \mathbb{Z}_2^x\ $ sending $\ g \in \mathbb{Z}_2^y\ $ to $\ g \circ f$. Let $$\mathbb{Z}_2[\mathbb{A}^{(\ )}] :  \mathrm{set} \ \longrightarrow \ \mathrm{set}$$ be the functor given by
$$\mathbb{Z}_2[\mathbb{A}^{(\ )}] \ = \ \mathbb{Z}_2^{(\ )} \circ \mathbb{Z}_2^{(\ )} \ = \ \mathbb{Z}_2^{\mathbb{Z}_2^{(\ )}},$$
i.e. $\mathbb{Z}_2[\mathbb{A}^{x}]$ is the algebra of regular Boolean functions on the affine space $\mathbb{Z}_2^x.$ \\

Recall that an operad $O\ $ in $\ \mathrm{Set}$ is given by a sequence of sets $\{O(n)\}_{n\in \mathbb{N}}$, together  with right actions of the permutations groups $$O(n) \times S_n \ \ \longrightarrow \ \ O(n)$$ and composition maps
$$c_k:O(k)\times O(n_1) \times \cdots \times O(n_k)\ \longrightarrow \ O(n_1 + \cdots n_k)$$
which satisfy the equivariance, associativity, and unity axioms \cite{m}. \\

Any set $X$ determines the endomorphism operad $\mathrm{End}_X$  defined by the sequence
$$\{\mathrm{End}_X(n)\}_{n\in \mathbb{N}} \ \ = \ \ \{\mathrm{M}(X^n,X)\}_{n\in \mathbb{N}}.$$ A pertumation $\alpha \in S_n$ acts on a map $f: X^n \longrightarrow X$ by
$$f\alpha(x_1,...,x_n) \ = \ f(x_{\alpha^{-1}1},...,x_{\alpha^{-1}n}) .$$  The composition maps arise as follows:
$$\mathrm{M}(X^k,X)\times \mathrm{M}(X^{n_1},X)\times \cdots \times \mathrm{M}(X^{n_1},X) \ \ \simeq $$
$$\mathrm{M}(X^k,X)\times \mathrm{M}(X^{n_1+ \cdots + n_k},X^k) \ \ \longrightarrow \ \ \mathrm{M}(X^{n_1+ \cdots + n_k},X^k),$$
where $\ \simeq \ $ stands for the natural isomorphism, and the last arrow is composition of maps.\\

\begin{thm}{\em The sequence $\{\mathbb{Z}_2[\mathbb{A}^{n}]\}_{n\in \mathbb{N}}$ defines an operad equivalent to the endomorphism operad of $\mathbb{Z}_2$ in $\mathrm{set}$.}
\end{thm}

\begin{proof} The result follows from the identifications
$$\mathbb{Z}_2[\mathbb{A}^{n}] \ = \ \mathbb{Z}_2^{\mathbb{Z}_2^{n}} \ = \ \mathrm{M}(\mathbb{Z}_2^{n}, \mathbb{Z}_2) \ = \ \mathrm{End}_{\mathbb{Z}_2}(n).$$
\end{proof}

A $S$-collection is a sequence of sets $\ X=\{X_n\}_{n\in \mathbb{N}}\ $  such that $\ X_n\ $ comes with a right $S_n$-action.
A sequence of sets $\ A=\{A_n\}_{n\in \mathbb{N}}\ $ generates a $S$-collection with free $S_n$ actions, namely the sequence
$$\ A\times S\ = \ \{A_n\times S_n\}_{n\in \mathbb{N}}.$$   A $S$-collection $X$ generates the free operad $\ FX=  \{FX_n\}_{n\in \mathbb{N}}\ $ described in \cite{m}.
For an $S$-collection of the form $A\times S$, with $A$ any sequence of sets, the free operad  $$FA\ := \ F(A\times S)$$ generated by $A\times S$ admits the following description.  $FA_n$ is the set of all pairs $(t,\alpha)$ such that:

\begin{itemize}
  \item $t$ is a $A$-decorated planar rooted tree with $n$ marked incoming leaves and one outgoing vertex, namely the root. $A$-decorated means that a choice of an element in $A_k$ is made for each vertex in the tree with incoming valence $k$ (other than the marked incoming leaves.)

  \item A numbering of the marked incoming leaves, i.e. a bijection from the marked incoming leaves to $[n].$

  \item The action of $S_n$ on $FA_n$ permutes the numberings of the leaves.

  \item The operadic compositions is given by the grafting of trees.
\end{itemize}

Let $\mathbb{P}$ be the free operad in $\mathrm{Set}$  generated by the following sequence:
$$A_0= \{0,1\}, \ \ \ \ \ \ A_2  =  \{+,.\} \ \ \ \ \ \mbox{and} \ \ \ \ \  \ A_k \ = \ \emptyset \ \ \ \mbox{for} \ \ \ k\neq 0,2.$$

\begin{prop}{\em For $x \in \mathrm{set}$, the set of all propositions $\mathbb{P}(x)$ is  equal to
the free $\mathbb{P}$-algebra generated by $x$. }
\end{prop}

\begin{proof} By definition the free $\mathbb{P}$-algebra generated by $x$ is given by
$$\mathbb{P}(x) \ = \ \coprod_{n=0}^{\infty} \mathbb{P}(n)\times_{S_n} x^{ n},$$
where $\mathbb{P}(n)\times_{S_n} x^{n}$ is the quotient of $\mathbb{P}(n)\times x^{n}$ by the relations:
$$(f\alpha, a_1,...,a_n) \ \sim \ (f, a_{\alpha^{-1}1},...,a_{\alpha^{-1}n}) $$
for $f \in \mathbb{P}(n), \ \ \alpha \in S_n, \ \ \mbox{and} \ \ (a_1,...,a_n) \in x^n.$\\

 Since $\mathbb{P}$ is the free operad generated by the sequences of sets $A$, the free algebra can equivalently be described as the set of pairs $(t,f)$ where:
\begin{itemize}
  \item $t$ is a $A$-decorated planar rooted tree with $n$ marked incoming leaves. Thus actually $t$ is a binary tree with branches corresponding to $+,.,$ the sum a product symbols.
  \item $f$ is a map from the $n$ marked incoming leaves to $x$.
\end{itemize}
It is clear that such pairs $(t,f)$ are in bijective correspondence with proposition in $\mathbb{P}(x)$.
\end{proof}

We also denote by $\mathbb{P}$ the functor $$\mathbb{P}: \mathrm{set} \ \longrightarrow  \ \mathrm{Set}$$ sending $x$ to the the set of propositions $\ \mathbb{P}(x), \ $ and $\ f:x \longrightarrow y\ $ to
its unique extension $$\mathbb{P}(f): \mathbb{P}(x) \longrightarrow \mathbb{P}(y)$$ sending $x$ to $y$
via $f$, and respecting the logical connectives.\\

Let  $\mathrm{Req}$ be the category of equivalence relations.
Objects in $\mathrm{Req}$ are pairs $(X, R)$ where $X$ is a set and $R$
is an equivalence relation on $X$. A morphism $\ f: (X, R) \longrightarrow (Y, S)\ $ in $\ \mathrm{Req}\ $ is a map $f:X \longrightarrow Y$ such that
$fR \subseteq S.\ $ We have a functor $$\mathrm{Req} \ \longrightarrow \ \mathrm{Set}$$ sending $(X,R)$ to the quotient
set $X/R$. \\

\begin{prop}{\em
The pair $(\mathbb{P}, \mathbb{R})$ constructed above defines a functor $$(\mathbb{P}, \mathbb{R}): \mathrm{set} \ \longrightarrow \ \mathrm{Req}$$ sending $x\ $ to $\ (\mathbb{P}(x),\mathbb{R}(x)). \  $
Thus we obtain the quotient functor  $\mathbb{P}/\mathbb{R}: \mathrm{set} \ \longrightarrow \ \mathrm{set}.$}
\end{prop}

\begin{prop}\label{yc}{\em The functors $\mathbb{P}/\mathbb{R}\ $ and  $\ \mathbb{Z}_2[\mathbb{A}^{(\ )}]$ are naturally isomorphic, both as set valued functors and as Boolean algebras valued functors.}
\end{prop}

\begin{proof}It follows from Lemma \ref{bag} that $\ \mathbb{P}/\mathbb{R}(x)\ $ and $\ \mathbb{Z}_2[\mathbb{A}^{x}]\ $ are naturally isomorphic Boolean algebras.
\end{proof}

\begin{prop}{\em The operads $\ \{\mathbb{P}/\mathbb{R}[n]\}_{n\in \mathbb{N}}\ \ $ and  $\ \ \{\mathbb{Z}_2[\mathbb{A}^{n}]\}_{n\in \mathbb{N}}$.}
\end{prop}

\begin{proof} We know from Proposition \ref{yc} that $ \mathbb{P}/\mathbb{R}[n]$ and $\mathbb{Z}_2[\mathbb{A}^{n}]$ are isomorphic Boolean algebras.  The operadic operations on $\mathbb{P}/\mathbb{R}[n]$ arise from the operations of substitution and renaming of variables on propositional formulae. It is well-known that the formation of truth functions behaves well with respect to the operation of substitution and renaming of variables on propositional formulae. Thus $\ \{\mathbb{P}/\mathbb{R}[n]\}_{n\in \mathbb{N}}\ \ $ and  $\ \ \{\mathbb{Z}_2[\mathbb{A}^{n}]\}_{n\in \mathbb{N}}\ \ $ agree also as operads.

\end{proof}

Recall that   $\mathbb{Z}_2[\mathbb{A}^{x}]$
is a poset with $f\leq g \ $ if and only if $\ f(a)\leq g(a)$ for all $a \in \mathbb{Z}_2^x$. \\

Classical propositional logic main concern is the pre-order $\ \vdash \ $ of entailment on $\mathbb{P}(x)$.\\

The entailment relation $\ \vdash \ $ can be defined semantically as follows:
for $ p,q \in \mathbb{P}(x)$, set $$p \vdash q  \ \ \  \mbox{if and only if} \ \ \ \widehat{p} \leq \widehat{q},$$
or equivalently, $$p \vdash q \ \ \ \mbox{if and only if there is} \ \ r \in \mathbb{P}(x) \ \  \mbox{such that}
  \ \ \  \widehat{p}= \widehat{q}\widehat{r}. $$

The entailment relation $\vdash$ can be defined syntactically as follows:
$$p\vdash q \ \ \ \mbox{if and only if there exists} \ r \in  \mathbb{P}(x) \ \mbox{such that } \ p \sim qr. $$

Next we move from the  propositional settings  to the operational settings, always within  a Boolean context. Recall from the introduction that  quantum observables are operators instead of propositions. In analogy with the classical
case we identify operators with words in a certain language. On the semantic side truth functions are replaced by
Boolean differential operators. \\

We think of quantum operational logic as arising from the following principles:

\begin{enumerate}

\item Simple-composite operators. \\
On the syntactic side operators are words in a certain language. For a set $x$ we let $\widetilde{x}=\{\widetilde{a} \ | \ a \in x\}$ be a set disjoint
from $x$ whose elements are of the form $\ \widetilde{a}\ $ for $\ a \in x.$
Given $x$, the set  $\mathbb{O}(x)$ of operators is obtained from the set of simple operators
$x \sqcup \widetilde{x} \subseteq \mathbb{O}(x)$ using the binary connectives product $.$ and  sum  $+$, and the constants $0,1$. Explicitly,  $\mathbb{O}(x)$ is defined recursively as the set of words in the symbols
$a \in x \sqcup \widetilde{x},., +,0,1,(,)$  such that:
\begin{itemize}
\item $x \sqcup \widetilde{x} \subseteq \mathbb{O}(x),\ $ $0 \in \mathbb{O}(x), \ $ and $\ 1 \in \mathbb{O}(x).$
\item If $p,q \in \mathbb{O}(x),$ then $(pq)$ and $(p+q)$ are also in $\mathbb{O}(x)$.
\end{itemize}

\item The logical interpretation of the connectives $.$ and $+$.\\
The product $pq$ generalizes
the classical connective $\mathrm{AND}$, but there is also an ordering behind it: the operator $pq$ may be interpreted as  "act with the operator $q$,
$\mathrm{AND \ THEN}$ act with the operator $p.$"
The connective $+$ correspond to the exclusive or $\mathrm{XOR}$.
The constants $0$ and $1$ stand for the null operator and the identity operator, respectively. The logical interpretation is $\mathrm{RESET \ TO}\ 0 \ $ and $\ \mathrm{LEAVE \ AS \ IS}$, respectively.

\item The algebra $\mathrm{BDO}_x=\mathrm{End}_{\mathbb{Z}_2}(\mathbb{Z}_2[\mathbb{A}^x])$ of Boolean differential operators on $\mathbb{Z}_2^x.$\\
On the semantic side $\mathrm{BDO}_x$ is thought as the quantum analogue for the Boolean algebra $\mathbb{Z}_2[\mathbb{A}^x]$ of
truth functions. Just as we have a map from propositions to truth functions, we have a map $$\widehat{(\ )} : \mathbb{O}(x) \ \longrightarrow \ \mathrm{End}_{\mathbb{Z}_2}(\mathbb{Z}_2[\mathbb{A}^x])$$ from
operators to Boolean differential operators given by:
\begin{itemize}
\item $\widehat{a}$ is the operator of multiplication by $a$, for  $a \in x$.
\item $\widehat{\widetilde{a}}$ is  the Boolean partial derivative $\partial_a$ along $a$, for  $a \in x$.
\item $\widehat{p+q}=\widehat{p}+\widehat{q} \ $ and $\ \widehat{pq}=\widehat{p}\widehat{q} \ $ for $\ p,q \in \mathbb{O}(x).$
\item $\widehat{0}$ is the operator identically equal to $0$, and $\widehat{1}$ is the identity
operator.
\end{itemize}
\end{enumerate}

We think of the composition $\circ$ of operators in $\mathrm{End}_{\mathbb{Z}_2}(\mathbb{Z}_2[\mathbb{A}^x])$ as the quantum
analogue of the  meet $\wedge$, or equivalently  the product, on $\mathbb{Z}_2[\mathbb{A}^x] = \mathrm{M}(\mathbb{Z}_2^x, \mathbb{Z}_2)$.
Indeed $\circ$ is an extension of the classical meet. Consider the inclusion map $\mathbb{Z}_2[\mathbb{A}^x] \longrightarrow \mathrm{End}_{\mathbb{Z}_2}(\mathbb{Z}_2[\mathbb{A}^x])$
sending $f \in \mathbb{Z}_2[\mathbb{A}^x]$ to the operator of multiplication by $f$. This map is additive and multiplicative, thus showing
that the quantum structures are, as they should, an extension
of the classical ones. \\

The map $\mathbb{O}(x) \longrightarrow \mathrm{End}_{\mathbb{Z}_2}(\mathbb{Z}_2[\mathbb{A}^x])$ turns out to be surjective, and there is
a well-defined procedure to tell when two operators are assigned the same Boolean differential operator, which we proceed to introduce. Sub-words are defined in
$\mathbb{O}(x)$ just as in propositional logic. We define an equivalence relation $\mathbb{R}(x)$, also denoted by $\sim$, on $\mathbb{O}(x)$.
For $p,q \in \mathbb{O}(x)$ we set:
$$p \ \mathbb{R}(x) \ q \ \ \ \ \mbox{if and only if} \ \ \ \ \widehat{p}=\widehat{q}.$$
$ \mathbb{R}(x)$ is defined  syntactically as follows:  $p$ and $q$ are related if and only if either $p=q$ or there exists a sequence
$p_1,...,p_k$, for some $k \geq 1$,  such that $p_1=p$, and $p_k=q$, and $p_{i+1}$ is obtained from $p_i$ by replacing
a sub-word of $p_i$ by an equivalent word according to the following relations:

\begin{itemize}

\item Associativity for the product: $ \ p(qr) \ \sim \  (pq)r.$

\item Associativity and commutativity for  $+$:
 $$(p+q)+r \ \sim \ p+(q+r),\ \ \ \ \ \ \ \ p+q \ \sim \ q+p.$$
\item Distributivity: $\ p(q+r) \ \sim \ pq+pr.$
\item Additive and multiplicative units: $\ 0+ p \ \sim \ p\ $ and  $\ 1p \ \sim \  p$
\item Additive nilpotency: $\ p+p  \ \sim \ 0$.
\item Multiplicative idempotency and nilpotency: $\ aa \ \sim \ a\ \ $ and $\ \ \widetilde{a}\widetilde{a} \ \sim \ 0$, for $\ a \in x.$
\item Commutation relations: $$ ba \ \sim \ ab\ \ \ \ \ \mbox{and} \ \ \ \ \ \widetilde{a}\widetilde{b} \ \sim \ \widetilde{b}\widetilde{a}, \ \ \ \mbox{for} \ \ a,b \in x,$$
$$\widetilde{b}a \ \sim \ a\widetilde{b} \ \ \ \ \mbox{for} \ \ \ \ a \neq b \in x, \ \ \ \ \mbox{and}$$
$$\widetilde{a}a \ \sim \ a\widetilde{a} + \widetilde{a} +1 \ \ \ \ \mbox{for} \ \ \ \  a \in x.$$
\end{itemize}

Let $\mathbb{B}$ be the category of finite sets and bijections \cite{RE2}. We recall that an $S$-collection as defined above is the same as a functor from $\mathbb{B}\ $ to $\ \mathrm{Set}$, also known as  a combinatorial species following the Montreal school initiated by Joyal.\\

Note that we may indeed regard $\ \mathbb{O}, \ \mathbb{R}, \ $ and $\ \mathrm{End}_{\mathbb{Z}_2}(\mathbb{Z}_2[\mathbb{A}^{( \ )}]) \ $ as functors $$\mathbb{B}\ \longrightarrow \ \mathrm{Set}.$$
The pair $(\mathbb{O}, \mathbb{R})$  defines a functor $$(\mathbb{O}, \mathbb{R}):\mathbb{B} \ \longrightarrow \ \mathrm{Req}.$$

\begin{prop}\label{spc}{\em The functors $\ \mathbb{O}/ \mathbb{R}\ $ and $\ \mathrm{End}_{\mathbb{Z}_2}(\mathbb{Z}_2[\mathbb{A}^{( \ )}])\ $
are naturally isomorphic as Boole-Weyl algebra valued functors.}
\end{prop}

\begin{proof} It follows from Theorem \ref{qbas} that for any finite set $x$ we have natural isomorphisms
$$\mathbb{O}/\mathbb{R}(x) \ \simeq \ \mathrm{End}_{\mathbb{Z}_2}(\mathbb{Z}_2[\mathbb{A}^x])$$ respecting the structures of Boolean algebras on both sides.
\end{proof}

Next we define the entailment pre-order $\ \vdash \ $ on $\ \mathbb{O}(x)$.\\

First we define a   pre-order on   $\mathrm{End}_{\mathbb{Z}_2}(\mathbb{Z}_2[\mathbb{A}^x])$. Given
$S,T \in \mathrm{End}_{\mathbb{Z}_2}(\mathbb{Z}_2[\mathbb{A}^x])$ we set
$$S \leq T \ \ \ \mbox{if and only if there exists } R \ \mbox{such that}\ \ S=TR.$$ For example, if $\ S=\pi_A \ $ and $\ T=\pi_B \ $ are projections onto the
subspaces $A$ and $B$ of $\mathbb{Z}_2[\mathbb{A}^x]$, respectively, then
$\ \pi_A\leq \pi_B \ $ if and only if  $\ A\subseteq B, \ $ since $\ \pi_A=\pi_B R \ $ implies that
$$\pi_B\pi_A=\pi_B \pi_B R=\pi_B R= \pi_A, \ \ \ \ \mbox{and \ thus} \ \ \ \ A\subseteq B.$$

Entailment is defined semantically as follows, for $p,q \in \mathbb{O}(x)$ we set
$$p \vdash q \ \ \ \mbox{if and only if there is} \ \  r\in \mathbb{O}(x) \ \  \mbox{such that} \ \  \widehat{p}=\widehat{q}\widehat{r}.$$
Equivalently, the entailment relation $\ \vdash \ $ can be defined syntactically as:
$$p\vdash q\ \ \ \mbox{if and only if there is} \ \ r \in  \mathbb{O}(x) \ \ \mbox{such that} \ \ p \sim qr.$$

As expected,  entailment on operators is an extension of  entailment on propositions.

\begin{prop}{\em The $(\mathbb{P}(\ ),\vdash),\ $ $(\mathbb{O}(\ ), \vdash), \  (\mathbb{Z}_2[\mathbb{A}^{( \ )}], \leq ), \
(\mathrm{End}_{\mathbb{Z}_2}(\mathbb{Z}_2[\mathbb{A}^{( \ )}]), \leq ) $ may be regarded as functors from $\mathbb{B}$ to the category of pre-ordered sets. Moreover, these functors fit into the following commutative diagram of natural transformations:
\[\xymatrix @R=.3in  @C=.4in
{ (\mathbb{P}(\ ), \vdash) \ar[r] \ar[d] &  (\mathbb{O}(\ ), \vdash) \ar[d]
\\ (\mathbb{Z}_2[\mathbb{A}^{( \ )}], \leq ) \ar[r] & (\mathrm{End}_{\mathbb{Z}_2}(\mathbb{Z}_2[\mathbb{A}^{(\ )}]), \leq )  } \]
where the top horizontal arrow is the natural inclusion map, the bottom horizontal arrow sends $f$ to the operator of
multiplication by $f$, and the vertical arrows are the valuation maps
from propositions and operators to truth functions and Boolean differential operators, respectively.}
\end{prop}

Let $\mathbb{Z}_2$-$\mathrm{Vect}$ be the category of vector spaces and linear transformation over $\mathbb{Z}_2$.   We recall from \cite{markl} that a prop $P$ in $\mathbb{Z}_2$-$\mathrm{Vect}$ is a strict symmetric monoidal category such that:
\begin{itemize}
  \item The objects are the natural numbers $\mathbb{N}.$
  \item The monoidal structure is given on objects by $n \otimes m = m+n.$
  \item The symmetric monoidal structure is enriched over  $\mathbb{Z}_2$-$\mathrm{Vect}$. In particular, the set of morphisms $P(n,m)$ are $\mathbb{Z}_2$-vector spaces, and the product $f\otimes g$ on morphisms is bilinear.
\end{itemize}

Each vector space $V$ over $\mathbb{Z}_2$ determines the prop $\mathrm{End}_V$ of endomorphisms,  given by
$$\mathrm{End}_V(n,m) \ = \ \mathrm{Hom}_{\mathbb{Z}_2}(V^{\otimes n}, V^{\otimes m}). $$

As we saw in Proposition \ref{spc}, propositional logic give us a syntactical presentation of the endomorphism operad of $\mathbb{Z}_2$ in the category of sets.  Next we extend this result to quantum operational logic which give us
a syntactical presentation of the diagonal part of the endomorphism prop of
$\mathbb{Z}_2[\mathbb{A}^1]$ in the category $\mathbb{Z}_2$-$\mathrm{vect}$.\\

It is easy to check that if $P$ is a prop, then its diagonal part $P(n,n)$ is naturally an $S$-collection, with the right action
$P(n,n)\times S_n \ \longrightarrow \ P(n,n) $ given by $$f\alpha \ = \ \alpha \circ f \circ \alpha^{-1}.$$

\begin{thm}{\em The functor $\mathbb{O}/\mathbb{R}$ is isomorphic to the diagonal part of the prop
$\ \mathrm{End}_{\mathbb{Z}_2[\mathbb{A}^1]}$ as Boole-Weyl algebra valued functors.}
\end{thm}

\begin{proof}
 We have the following chain of natural isomorphism $$\mathbb{O}/\mathbb{R}[n] \simeq  \mathrm{End}_{\mathbb{Z}_2}(\mathbb{Z}_2[\mathbb{A}^{n}]) \simeq
\mathrm{End}_{\mathbb{Z}_2}(\mathbb{Z}_2[\mathbb{A}^1]^{\otimes n})  = \mathrm{Hom}_{\mathbb{Z}_2}(\mathbb{Z}_2[\mathbb{A}^1]^{\otimes n}, \mathbb{Z}_2[\mathbb{A}^1]^{\otimes n})
 =  \mathrm{End}_{\mathbb{Z}_2[\mathbb{A}^1]}(n,n)$$ respecting the structures of Boole-Weyl algebras, where the first isomorphim comes from Theorem \ref{qbas}, and the second isomorphism comes from the fact that fact that $\mathbb{Z}_2[\mathbb{A}^{n}]\simeq\mathbb{Z}_2[\mathbb{A}^1]^{\otimes n}. \ $  These isomorphims are $S_n$-equivariant since one can check for $\alpha \in S_n$ that
$$\alpha\circ x_i \circ \alpha^{-1} \ = \ x_{\alpha^{-1}i} \ \ \ \ \ \ \ \ \mbox{and} \ \ \ \ \ \ \ \  \alpha\circ y_i \circ \alpha^{-1}\ = \ y_{\alpha^{-1}i}.$$
\end{proof}

\section{Set Theoretical Viewpoint}\label{s6}

The link between classical propositional logic and the algebra of sets arises as follows. Recall
that there is a map $$\mathbb{P}(x) \ \longrightarrow \ \mathrm{M}(\mathbb{Z}_2^x, \mathbb{Z}_2)$$ sending each proposition
to its truth function. Since $\mathrm{M}(\mathbb{Z}_2^x, \mathbb{Z}_2)$ can be identified with $\mathrm{P}\mathrm{P}(x)$
we obtain a map $$ \mathbb{P}(x) \ \longrightarrow \ \mathrm{P}\mathrm{P}(x) $$ assigning to each proposition $p$  a set $\widehat{p}$ of subsets
of $x$. Moreover, the logical connectives intertwine nicely with the set theoretical operations on subsets, namely:
$$ \widehat{p + q} = (\widehat{p} \cup \widehat{q}) \setminus  (\widehat{p} \cap \widehat{q}), \ \ \ \
\widehat{pq}=\widehat{p \wedge q} = \widehat{p} \cap \widehat{q}, \ \ \ \  \widehat{ \neg p } = \overline{\widehat{p} }, \ \ \ \  \widehat{p \vee q} = \widehat{p} \cup \widehat{q} \ \ \
\widehat{p \rightarrow q} = \overline{\widehat{p}} \cup \widehat{q}.$$
We stress the, often overlooked, fact that classical propositional logic describes the set theoretical
operations present in $\mathrm{P}\mathrm{P}(x)$ that are common to all sets of the form $\mathrm{P}(y),$ i.e.
the extra algebraic structures present in $\mathrm{P}(y)$ when $y=\mathrm{P}(x)$ play no significative role
in the logic/set theory relation outlined above. Thus whereas the axioms characterizing the algebras $\mathrm{P}(x)$ have been massively studied, the algebraic structures characterizing $\mathrm{P}^n(x)$, for $n \geq 2$, have seldom attracted any attention.\\

We proceed to consider the analogue statements in the quantum operational scenario.
We present our results for sets of the form $[n]$. It should be clear, however,
that the same constructions apply for arbitrary finite sets.\\

As in the classical case we have a map $\ \mathbb{O}_n \ \longrightarrow \ \mathrm{End}_{\mathbb{Z}_2}(\mathbb{Z}_2[\mathbb{A}^n]) \ $ sending operators (words in a certain language) to
Boolean differential operators. As shown in Section \ref{bwa} it is possible to identify $\mathrm{End}_{\mathbb{Z}_2}(\mathbb{Z}_2[\mathbb{A}^n])$ with the Boolean-Weyl algebra
$\mathrm{BW}_n$, and with the shifted Boolean-Weyl algebra $\mathrm{SBW}_n$.
Moreover, we described several explicit bases for these algebras.
For example, each $f\in \mathrm{BW}_n$ can be written in an unique way as:
$$f= \sum_{a, b \in \mathrm{P}[n]}f(a,b)x^ay^b.$$
Thus Boolean differential operators can be identified with maps from $\mathrm{P}[n] \times \mathrm{P}[n]$
to $\mathbb{Z}_2$, and we get the identifications:
$$\mathrm{End}_{\mathbb{Z}_2}(\mathbb{Z}_2[\mathbb{A}^n]) \simeq \mathrm{BDO}_n \simeq \mathrm{BW}_n \simeq \mathrm{M}(\mathrm{P}[n] \times \mathrm{P}[n], \mathbb{Z}_2) \simeq
 \mathrm{P}(\mathrm{P}[n] \times \mathrm{P}[n]) \simeq  \mathrm{P}\mathrm{P}([n] \sqcup [n]).$$

We adopt the following conventions. We identify $[n] \sqcup [n]$ with the set
$$[n,\widetilde{n}]=\{1,2,...,n, \widetilde{1}, \widetilde{2},..., \widetilde{n} \}.$$
Given $a \subseteq [n]$ we let $\widetilde{a}=\{\widetilde{i} \ | \ i \in a\}$ be the corresponding subset of $[\widetilde{n}]=\{\widetilde{1}, \widetilde{2},..., \widetilde{n} \}.$ An element $a \in \mathrm{P}[n,\widetilde{n}]$ will be written as
$a=a_1 \sqcup \widetilde{a}_2$ with $a_1, a_2 \in \mathrm{P}[n]$. Note that we have a natural map
$\pi :  \mathrm{P}[n,\widetilde{n}] \longrightarrow \mathrm{P}[n]  \times \mathrm{P}[n]$ given
by $\pi (a)=(\pi_1(a), \pi_2(a))=(a_1, a_2)$.
We use indices without tilde to denote monomials of regular functions, and indices
with tilde to denote the Boolean derivatives or shift operators.
The identification $\mathrm{End}_{\mathbb{Z}_2}(\mathbb{Z}_2[\mathbb{A}^n]) = \mathrm{P}\mathrm{P}[n,\widetilde{n}]$ allow us to give
set theoretical interpretations to the algebraic structures on Boolean differential
operators. Unlike the classical set theoretical structures, the quantum operational structures are not defined for
arbitrary sets of the form $\mathrm{P}(y)$, quite to the contrary, they very much depend on the fact that  $y=\mathrm{P}[n,\widetilde{n}].$\\

Below we consider pairs $(A,M)$ where $A$ is a  $\mathbb{Z}_2$-algebra and $M$ is an $A$-module. A morphism
$(f,g):(A_1,M_1) \rightarrow (A_2,M_2)$ between such pairs, is given by  $\mathbb{Z}_2$-linear maps
$f:A_1  \rightarrow A_2$  and $g: M_1 \rightarrow M_2$ such that $f$ is an algebra morphism, and
$g(am)=f(a)g(m)$ for all $a \in A, m \in M.$\\

The additive structure $+: \mathrm{P}\mathrm{P}[n,\widetilde{n}] \times  \mathrm{P}\mathrm{P}[n,\widetilde{n}] \ \longrightarrow \
 \mathrm{P}\mathrm{P}[n,\widetilde{n}]\ $ on $\ \mathrm{P}\mathrm{P}[n,\widetilde{n}]\ $
is given by $$A+B\ = \ A \cup B \setminus (A \cap B).$$

We consider several isomorphic products $\ \circ, \ \bullet, \ \star, \ $ and  $\ \ast \ $ on $\mathrm{P}\mathrm{P}[n,\widetilde{n}]$
displaying different combinatorial properties. The products correspond with the various bases
for $\mathrm{BW}_n$ and $\mathrm{SBW}_n$.\\

We first introduce the product $\circ$  on $\mathrm{P}\mathrm{P}[n,\widetilde{n}].$

\begin{thm}\label{lala}{\em There are maps $$\circ: \mathrm{P}\mathrm{P}[n,\widetilde{n}] \times \mathrm{P}\mathrm{P}[n,\widetilde{n}]\  \longrightarrow \ \mathrm{P}\mathrm{P}[n,\widetilde{n}] \ \ \ \ \ \mbox{and} \ \ \ \ \ \circ:\mathrm{P}\mathrm{P}[n,\widetilde{n}] \times \mathrm{P}\mathrm{P}[n]\ \longrightarrow \ \mathrm{P}\mathrm{P}[n],$$ turning $\mathrm{P}\mathrm{P}[n,\widetilde{n}]$ into a $\mathbb{Z}_2$-algebra and
$\mathrm{P}\mathrm{P}[n]$ into a module over $\mathrm{P}\mathrm{P}[n,\widetilde{n}],$ such that  the pair
$(\mathrm{P}\mathrm{P}[n,\widetilde{n}], \mathrm{P}\mathrm{P}[n])$ is isomorphic to
$\left( \mathrm{End}_{\mathbb{Z}_2}( \mathrm{M}(\mathbb{Z}_2^n, \mathbb{Z}_2) ),   \mathrm{M}(\mathbb{Z}_2^n, \mathbb{Z}_2 ) \right)$
via the maps $$A \ \longrightarrow \ \sum_{a \in A}m^{a_1}\partial^{a_2} \ \ \ \ \ \ \ \mbox{and} \ \ \ \ \ \ \ F \ \longrightarrow \  \sum_{a \in F}m^{a}.$$}
\end{thm}

\begin{proof}
From Theorem \ref{tt1} and Proposition \ref{do} we see that the desired products $\circ$ are constructed as follows.
For $A,B  \in \mathrm{P}\mathrm{P}[n,\widetilde{n}],$  the product $AB \in \mathrm{P}\mathrm{P}[n,\widetilde{n}]$ is given by
$$A\circ B \ = \ \left\{ a \in \mathrm{P}[n,\widetilde{n}] \ \Big| \ O\Big\{ b \in \mathrm{P}[n],c \in B \ | \
c_2 \subseteq a_2, \ a_1 \sqcup \widetilde{b} \in A, \ a_2 \setminus c_2 \subseteq a_1 +c_1 \subseteq b \Big\} \right\}.$$ Let $A \in \mathrm{P}\mathrm{P}[n,\widetilde{n}] $ and $F \in \mathrm{P}\mathrm{P}[n]$, then
$AF \in \mathrm{P}\mathrm{P}[n]$ is given by
$$A\circ F \ = \ \Big\{ \ a \in \mathrm{P}[n] \ \ | \ \ O\{b \subseteq c \in \mathrm{P}[n] \ | \ a \sqcup \widetilde{c} \in A,  \
a + b \in F \} \ \Big\} .$$
\end{proof}

We provide a few applications of Theorem \ref{lala}.

\begin{exmp}{\em  In $\mathrm{P}\mathrm{P}[3,\widetilde{3}]$ we have that $$\{\{1,2,\overline{2},\overline{3} \}\}\circ \{\{1,3,\overline{1},\overline{2} \}\}\ = \
\{ \{1,2,\overline{1},\overline{2} \}, \ \{1,2,\overline{1},\overline{2},\overline{3}\}  \}.$$ Indeed
$a \in \{\{1,2,\overline{2},\overline{3} \}\}\circ \{\{1,3,\overline{1},\overline{2} \}\}$ if there is a odd number
of suitable pairs $b,c$. Note that $$c = \{1,3,\overline{1},\overline{2} \}, \ \ \ \ \ \{1,2\}\subseteq a_2, \ \ \ \ \
a_1 \sqcup \widetilde{b}=\{1,2,\overline{2},\overline{3}\},$$ and thus necessarily $a_1=\{1,2\}$ and $b=\{2,3\}$.
Moreover, we must have that $$a_2 \setminus \{1,2\} \subseteq \{1,2\} + \{1,3\} \subseteq \{2,3\}, \ \ \ \ \ \mbox{that is}
\ \ \ \ \ a_2 \setminus \{1,2\} \subseteq \{2,3\}.$$ Thus either $a_2=\{1,2\}$ or $a_2=\{1,2,3\}$ yielding the desired result.

 }
\end{exmp}

\begin{exmp}{\em For $A \in \mathrm{P}\mathrm{P}[n]$ set $A'=\pi_2^{-1}(A)$. Let $F \in \mathrm{P}\mathrm{P}[n]$, the we have that:
$$A'\circ F\ = \ \Big\{ \ a \in \mathrm{P}[n] \ \ | \ \ O\{b \subseteq c \in \mathrm{P}[n] \ | \  \widetilde{c} \in A,  \
a + b \in F \} \ \Big\} .$$
Note that $\ \sum_{a \in \mathrm{P}[n]}m^a=1\ $ and thus:
$$\left( \sum_{a \in A} \partial^a \right)\circ \left( \sum_{b \in F} m^b \right)\ = \
 \sum_{a \in  A'\circ F} m^a .$$
 }
\end{exmp}

\begin{exmp}{\em For $A  \in  \mathrm{P}\mathrm{P}[n]\ $ let $\ \widehat{A} =\{a \in \mathrm{P}[n,\widetilde{n}] \ | \ a_1 =a_2 \in A \ \}$. Let $F \in  \mathrm{P}\mathrm{P}[n]$, then
$$\widehat{A}\circ F\ = \ \left\{ \ a \in \mathrm{P}[n] \ \ | \ \ O\{e \subseteq a \ | \  \ a+e \in F \} \ \right\} \ \  \mbox{and therefore}$$
$$\left( \underset{{a \in A}} {\sum}m^a \partial^a  \right)\circ \left(\underset{{b \in F}} {\sum}m^b\right)\ = \
\sum_{a \in \widehat{A}\circ F }m^a.$$ }
\end{exmp}

\

Next we introduce the product $\bullet$  on $\mathrm{P}\mathrm{P}[n,\widetilde{n}].$

\begin{thm}\label{lolo}{\em There are maps $$\bullet : \mathrm{P}\mathrm{P}[n,\widetilde{n}] \times \mathrm{P}\mathrm{P}[n,\widetilde{n}]\  \longrightarrow \ \mathrm{P}\mathrm{P}[n,\widetilde{n}] \ \ \ \ \ \mbox{and} \ \ \ \ \ \bullet : \mathrm{P}\mathrm{P}[n,\widetilde{n}] \times \mathrm{P}\mathrm{P}[n]\ \longrightarrow \ \mathrm{P}\mathrm{P}[n]$$ such that the pair
$(\mathrm{P}\mathrm{P}[n,\widetilde{n}], \mathrm{P}\mathrm{P}[n])$ is isomorphic to
$\left( \mathrm{End}_{\mathbb{Z}_2}( \mathrm{M}(\mathbb{Z}_2^n, \mathbb{Z}_2) ),   \mathrm{M}(\mathbb{Z}_2^n, \mathbb{Z}_2 ) \right)$
via the maps $$A \ \longrightarrow \ \sum_{a \in A}x^{a_1}\partial^{a_2} \ \ \ \ \ \mbox{and} \ \ \ \ \ F \ \longrightarrow \ \sum_{a \in F}x^{a}.$$}
\end{thm}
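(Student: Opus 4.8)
The plan is to mirror the proof of the preceding theorem, replacing the $m$-basis by the $x$-basis throughout. First I would record that, by Theorem \ref{qbas} together with the change-of-basis relations following Lemma \ref{bases}, the family $\{x^{a_1}\partial^{a_2} \mid a_1, a_2 \in \mathrm{P}[n]\}$ is a $\mathbb{Z}_2$-basis of $\mathrm{BW}_n \cong \mathrm{End}_{\mathbb{Z}_2}(\mathrm{M}(\mathbb{Z}_2^n,\mathbb{Z}_2))$, while $\{x^a \mid a \in \mathrm{P}[n]\}$ is a basis of $\mathrm{M}(\mathbb{Z}_2^n,\mathbb{Z}_2)$. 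Writing each $a \in \mathrm{P}[n,\widetilde{n}]$ as $a = a_1 \sqcup \widetilde{a}_2$ and identifying a family with its characteristic function, the assignment $\Phi\colon A \mapsto \sum_{a\in A} x^{a_1}\partial^{a_2}$ is then a $\mathbb{Z}_2$-linear bijection $\mathrm{P}\mathrm{P}[n,\widetilde{n}] \to \mathrm{End}_{\mathbb{Z}_2}(\mathrm{M}(\mathbb{Z}_2^n,\mathbb{Z}_2))$, and $\Psi\colon F \mapsto \sum_{a\in F} x^a$ is a bijection $\mathrm{P}\mathrm{P}[n] \to \mathrm{M}(\mathbb{Z}_2^n,\mathbb{Z}_2)$. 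This reduces the theorem to transporting the algebra structure of $\mathrm{End}$ and its tautological module structure along these two bijections, which automatically produces operations $\bullet$ making $(\Phi,\Psi)$ into a morphism of pairs.

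The substance of the proof is to make the transported operations explicit by reading off the structure constants from Theorem \ref{tt1}(2) and Proposition \ref{do}(2). Since $\Phi(A)_x(a_1,a_2)=1$ exactly when $a_1\sqcup\widetilde{a}_2\in A$, applying Theorem \ref{tt1}(2) to $\Phi(A)$ and $\Phi(B)$ shows that $e\sqcup\widetilde{h}\in A\bullet B$ iff the total parity $\sum c(a,b,c,d,e,h)$, summed over all $a\sqcup\widetilde{b}\in A$ and $c\sqcup\widetilde{d}\in B$, is odd; expanding the definition of $c(a,b,c,d,e,h)$ turns this into an odd-cardinality condition on the sextuples $(a,b,c,d,k_1,k_2)$ with $a\sqcup\widetilde{b}\in A$, $c\sqcup\widetilde{d}\in B$, $k_1\subseteq k_2\subseteq b\cap c$, $a\cup(c\setminus k_2)=e$, and $b\setminus k_1=h\setminus d$. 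Likewise, Proposition \ref{do}(2) yields the module action: $e\in A\bullet F$ iff the triples $(a,b,c)$ with $b\subseteq c$, $a\cup(c\setminus b)=e$, $a\sqcup\widetilde{b}\in A$, and $c\in F$ number oddly.

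Finally I would check that these explicit operations assemble into an isomorphism of pairs, which is now essentially formal: because $\bullet$ was defined by transport along bijections, $\Phi$ is a ring isomorphism, $\Psi$ is a linear isomorphism, and the compatibility $\Psi(A\bullet F)=\Phi(A)\bigl(\Psi(F)\bigr)$ is exactly the content of Proposition \ref{do}(2). The step I expect to be the main obstacle is the bookkeeping in the second paragraph: unlike the $m$-basis case of the previous theorem, where $m^ay^bm^cy^d$ collapsed via a Kronecker delta, the $x$-basis product genuinely ranges over the absorption sets $k_1\subseteq k_2$, so care is needed to keep the parity count $O\{\cdots\}$ correct when passing from the coefficient formula to the set-theoretic one. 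Once this translation is verified, the isomorphism follows immediately.
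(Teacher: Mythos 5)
Your proposal is correct and takes essentially the same route as the paper: the paper also obtains $\bullet$ by transporting the structure constants of Theorem \ref{tt1}(2) and Proposition \ref{do}(2) along the basis identifications $A \mapsto \sum_{a \in A} x^{a_1}\partial^{a_2}$ and $F \mapsto \sum_{a \in F} x^{a}$, arriving at exactly your parity conditions (the paper states them as $O\{b \in A,\, c \in B,\, k_1 \subseteq k_2 \mid \cdots\}$ for the product and $O\{b \in A,\, c \in F \mid b_2 \subseteq c,\ b_1 \cup (c \setminus b_2) = a\}$ for the action). Your explicit remarks on the bijectivity of the transport maps and on collapsing the iterated parity counts into a single odd-cardinality condition over sextuples are left implicit in the paper but amount to the same argument.
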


\begin{proof}
From Theorem \ref{tt1} and Proposition \ref{do} we see that the desired products $\bullet$ are constructed as follows.
For $A,B  \in \mathrm{P}\mathrm{P}[n,\widetilde{n}],$  the product $A\bullet B \in \mathrm{P}\mathrm{P}[n,\widetilde{n}]$ is such that
$a \in A\bullet B$ iff
$$O\Big\{b \in A, c \in B, k_1 \subseteq k_2 \ \Big|
\ b_1 \subseteq a_1,\ c_1 \subseteq a_2, \ k_2 \subseteq b_2 \cap c_1, \ b_1 \cup ( c_1 \setminus k_2) = a_1, \  b_2 \setminus k_1= a_2 \setminus c_2 \Big\}.$$ Let $\ A \in \mathrm{P}\mathrm{P}[n,\widetilde{n}] \ $ and $\ F \in \mathrm{P}\mathrm{P}[n], \ $ then $\ A \bullet F \in \mathrm{P}\mathrm{P}[n]\ $ is given by
$$A \bullet F \ = \ \Big\{ \ a \in \mathrm{P}[n] \ \ | \ \
O\{ b \in A, c \in F \ \Big| \ b_2 \subseteq c, \  b_1 \cup (c \setminus b_2)=a\} \ \Big\} .$$
\end{proof}

We provide a few applications of Theorem \ref{lolo}.

\begin{exmp}{\em In $\mathrm{P}\mathrm{P}[3,\widetilde{3}]$ we have that $$\{\{1,3,\overline{2} \}\} \bullet \{ \{2,\overline{1} \}\}\ = \
\{\{1,2,3, \overline{1}, \overline{2} \} , \{1,3,\overline{1},\overline{2} \} , \{1,3,\overline{1} \}\}.$$
Indeed, we must have $b=\{1,3,\overline{2} \}\ $ and $\ c= \{2,\overline{1} \}$, and thus
there are three options for $$k_1 \subseteq k_2 \subseteq [2], \ \ \ \mbox{namely} \ \ \ \emptyset \subseteq \emptyset, \ \ \
\emptyset \subseteq \{2\}, \ \ \ \mbox{and} \ \ \ \{2\} \subseteq \{2\},$$ giving rise to the sets
$\{1,2,3, \overline{1}, \overline{2} \} , \ \ \{1,3,\overline{1},\overline{2} \} , \ \ \{1,3,\overline{1} \},$
respectively.

}
\end{exmp}

\begin{exmp}{\em For $A\in \mathrm{P}\mathrm{P}[n]\ $ set $\ A'=\pi^{-1}(\{\emptyset\}\times A). \ $ Let $F \in \mathrm{P}\mathrm{P}[n]$ then we have that:
$$A'\bullet F\ = \ \Big\{ \ a \in \mathrm{P}[n] \ \ \Big| \ \
O\{b \in A, c \in F \ | \ b \subseteq c, \  c \setminus b=a\} \ \Big\}$$
Therefore we get that
$$\left( \sum_{a \in A} \partial^a \right)\circ \left( \sum_{b \in F} x^b \right)\ = \ \sum_{a \in A'\bullet F} x^a. $$
}
\end{exmp}

\begin{exmp}{\em  For $A  \in  \mathrm{P}\mathrm{P}[n]$ let
$\widehat{A} =\{a \in \mathrm{P}[n,\widetilde{n}] \ | \ a_1 =a_2 \in A \ \}$. Let $F \in  \mathrm{P}\mathrm{P}[n]$, then
$$\widehat{A}\bullet F\ = \ \left\{ \ a \in F \ \ | \ \ O\{b \in A \ | \  \ b \subseteq a \} \ \right\} \ \  \mbox{and therefore}$$
$$\left( \underset{{a \in A}} {\sum}x^a \partial^a  \right)\circ \left(\underset{{b \in F}} {\sum}x^b\right) \ = \
\sum_{a \in \widehat{A}\bullet F }x^a \ = \ \sum_{a \in F}O\{ b \in A \ | \ b \subseteq a \}x^a.$$
In particular we have that:
$\ \ \widehat{\mathrm{P}[n]}\bullet \mathrm{P}[n]= \left\{ \emptyset \right\}\ $ and thus
$$\left( \underset{{a \in \mathrm{P}[n]}} {\sum}x^a \partial^a  \right)\circ \left(\underset{{b \in \mathrm{P}[n]}} {\sum}x^b\right)\ = \ 1.$$
}
\end{exmp}

\

Next we introduce the product $\star$  on $\mathrm{P}\mathrm{P}[n,\widetilde{n}].$

\begin{thm}\label{lulu}{\em There are maps $$\star: \mathrm{P}\mathrm{P}[n,\widetilde{n}] \times \mathrm{P}\mathrm{P}[n,\widetilde{n}] \ \longrightarrow  \ \mathrm{P}\mathrm{P}[n,\widetilde{n}] \ \ \ \ \ \ \ \mbox{and} \ \ \ \ \ \ \ \star:\mathrm{P}\mathrm{P}[n,\widetilde{n}] \times \mathrm{P}\mathrm{P}[n] \ \longrightarrow \ \mathrm{P}\mathrm{P}[n],$$ turning $\mathrm{P}\mathrm{P}[n,\widetilde{n}]$ into a $\mathbb{Z}_2$-algebra and
$\mathrm{P}\mathrm{P}[n]$ into a module over $\mathrm{P}\mathrm{P}[n,\widetilde{n}],$ such that  the pair
$(\mathrm{P}\mathrm{P}[n,\widetilde{n}], \mathrm{P}\mathrm{P}[n])$ is isomorphic to
$\left( \mathrm{End}_{\mathbb{Z}_2}( \mathrm{M}(\mathbb{Z}_2^n, \mathbb{Z}_2) ),   \mathrm{M}(\mathbb{Z}_2^n, \mathbb{Z}_2 ) \right)$
via the maps $$A \ \longrightarrow \ \sum_{a \in A}m^{a_1}s^{a_2} \ \ \ \ \ \ \ \mbox{and} \ \ \ \ \ \ \ F \ \longrightarrow \  \sum_{a \in F}m^{a}.$$}
\end{thm}

\begin{proof}
From Theorem \ref{tt2} and Proposition \ref{dos} we see that the desired products $\star$ are constructed as follows.
For $A,B  \in \mathrm{P}\mathrm{P}[n,\widetilde{n}],$  the product $A\star B \in \mathrm{P}\mathrm{P}[n,\widetilde{n}]$ is given by
$$A\star B \ = \ \left\{ a \in \mathrm{P}[n,\widetilde{n}] \ | \ O\{ b \in \mathrm{P}[n] \ | \
a_1 \sqcup \widetilde{b} \in A, \ (a_1 + b )\sqcup \widetilde{(a_2 + b)} \in B \} \right\}.$$
Let $A \in \mathrm{P}\mathrm{P}[n,\widetilde{n}] $ and $F \in \mathrm{P}\mathrm{P}[n]$, then
$A\star F \in \mathrm{P}\mathrm{P}[n]$ is given by
$$A\star F \ = \ \left\{  a \in \mathrm{P}[n] \ \ | \ \ O\{b  \in \mathrm{P}[n] \ | \ a \sqcup \widetilde{b} \in A,  \
a + b \in F \}  \right\} .$$
\end{proof}

We provide a few applications of Theorem \ref{lulu}.

\begin{exmp}{\em In $\mathrm{P}\mathrm{P}[3,\widetilde{3}]$ we have that $\{\{1,2,3,\overline{3} \}\} \star \{ \{1,2,\overline{2},\overline{3} \}\}=
\{\{1,2,3, \overline{2} \} \}.$
From the equation $a_1 \sqcup \widetilde{b} \in A$ we see that $a_1=\{1,2,3\}$ and
$b=\{3 \}$. Also we must have $a_1 + \{3 \}=\{1,2\}$, which holds, and $a_2 + \{3 \}=\{2,3\}$ which implies that
$a_2=\{2 \}.$
}
\end{exmp}

\begin{exmp}{\em For $A \in \mathrm{P}\mathrm{P}[n]$ set $A'=\pi_2^{-1}(A)$. Let $F \in \mathrm{P}\mathrm{P}[n]$, the we have that:
$$A'\star F \ = \ \Big\{  a \in \mathrm{P}[n] \ \ \Big| \ \
O\{b  \in A \ | \  a + b \in F \} \ \Big\}.$$
Therefore
$$\left( \sum_{a \in A} s^a \right)\circ \left( \sum_{b \in F} m^b \right) = \sum_{a \in A'\star F} m^a \ \
\mbox{in particular} \ \ \left( \sum_{a \in A} s^a \right)\circ \left( \sum_{b \in \mathrm{P}[n]} m^b \right)=OA\sum_{a \in  \mathrm{P}[n]} m^a .$$
}
\end{exmp}

\begin{exmp}\label{53}{\em  For $A  \in  \mathrm{P}\mathrm{P}[n]$ set
$\widehat{A} =\{a \in \mathrm{P}[n,\widetilde{n}] \ | \ a_1 =a_2 \in A \ \}$. Let $F \in  \mathrm{P}\mathrm{P}[n]$, then
$\widehat{A}\star F= \emptyset \ \ \mbox{if} \ \ \emptyset \not \in F \  \ \mbox{and} \ \
\widehat{A}\star F=A\ \ \mbox{if} \ \ \emptyset \in F. \ $ Therefore
$$\left( \underset{{a \in A}} {\sum}m^a s^a  \right)\circ \left(\underset{{b \in F}} {\sum}m^b\right)\ = \
c\sum_{a \in A }m^a,$$
where $c=1\ $ if $\ \emptyset \in F, \ \ $ and $\ \ c=0\ $ if $\ \emptyset \not \in F$.}
\end{exmp}

\begin{exmp}{\em Let $\widehat{A}$ be as Example \ref{53}, then $\widehat{A} \star \widehat{A}=\widehat{A}$ if
$\emptyset \in A,\ $ and $\ \widehat{A} \star \widehat{A}=\emptyset$ otherwise. Indeed, $a \in \mathrm{P}[n,\widetilde{n}]$ belongs
to $\widehat{A} \star \widehat{A}$ if $a_1 \sqcup \widetilde{b} \in \widehat{A}$, i.e. $a_1=b \in A,$ and
$(a_1 + b )\sqcup \widetilde{(a_2 + b)} \in \widehat{A}$, i.e.
$\emptyset \in A$ and $a_1=a_2 \in A.$
}
\end{exmp}

\begin{exmp}{\em  For $A  \in  \mathrm{P}\mathrm{P}[n]$ set
$$\widetilde{A} =\{a \in \mathrm{P}[n,\widetilde{n}] \ | \ \overline{a_1} =a_2 \in A \ \}.$$
Then $\widetilde{A} \star \widetilde{A}=\widetilde{A}\ $ if
$ \ [n] \in A, \ \ $ and $\ \ \widehat{A} \star \widehat{A}=\emptyset\ $ if $\ [n] \not \in A$. Indeed, $a \in \mathrm{P}[n,\widetilde{n}]$ belongs
to $\widetilde{A} \star \widetilde{A}$ if $a_1 \sqcup \widetilde{b} \in \widetilde{A}$, i.e. $\overline{a}_1=b \in A,$ and
$(a_1 + b) \sqcup \widetilde{(a_2 + b)} \in \widehat{A}$,
i.e. $[n] \in A$ and $a_2=\emptyset+b=b= \overline{a}_1.$
}
\end{exmp}

\

Finally we introduce the product $\ast$  on $\mathrm{P}\mathrm{P}[n,\widetilde{n}].$

\begin{thm}\label{lili}{\em There are maps $$\ast: \mathrm{P}\mathrm{P}[n,\widetilde{n}] \times \mathrm{P}\mathrm{P}[n,\widetilde{n}]  \ \longrightarrow \ \mathrm{P}\mathrm{P}[n,\widetilde{n}] \ \ \ \ \ \ \ \mbox{and} \ \ \ \ \ \ \ \ast:\mathrm{P}\mathrm{P}[n,\widetilde{n}] \times \mathrm{P}\mathrm{P}[n] \ \longrightarrow \ \mathrm{P}\mathrm{P}[n],$$ turning $\mathrm{P}\mathrm{P}[n,\widetilde{n}]$ into a $\mathbb{Z}_2$-algebra and
$\mathrm{P}\mathrm{P}[n]$ into a module over $\mathrm{P}\mathrm{P}[n,\widetilde{n}],$ such that  the pair
$(\mathrm{P}\mathrm{P}[n,\widetilde{n}], \mathrm{P}\mathrm{P}[n])$ is isomorphic to
$\left( \mathrm{End}_{\mathbb{Z}_2}( \mathrm{M}(\mathbb{Z}_2^n, \mathbb{Z}_2) ),   \mathrm{M}(\mathbb{Z}_2^n, \mathbb{Z}_2 ) \right)$
via the maps $$A \ \longrightarrow \ \sum_{a \in A}x^{a_1}s^{a_2} \ \ \ \ \ \ \ \mbox{and} \ \ \ \ \ \ \ F \ \longrightarrow \  \sum_{a \in F}x^{a}.$$}
\end{thm}

\begin{proof}
From Theorem \ref{tt2} and Proposition \ref{dos} we see that the desired products $\star$ are constructed as follows.
For $A,B  \in \mathrm{P}\mathrm{P}[n,\widetilde{n}],$  the product $A\ast B \in \mathrm{P}\mathrm{P}[n,\widetilde{n}]$ is given by
$$\left\{ a \in \mathrm{P}[n,\widetilde{n}] \ \Big| \ O\{ b,c,d,e \in \mathrm{P}[n] \ | \
e \subseteq c\cap d,\ b\cup d\setminus e= a_1,\ b \sqcup \widetilde{c} \in A, \ d \sqcup \widetilde{(c+a_2)} \in B \} \right\}.$$
Let $A \in \mathrm{P}\mathrm{P}[n,\widetilde{n}] $ and $F \in \mathrm{P}\mathrm{P}[n]$, then
$A\ast F \in \mathrm{P}\mathrm{P}[n]$ is given by
$$A\ast F \ = \ \left\{ \ a \in \mathrm{P}[n] \ \ | \ \ O\{b,c,d \in \mathrm{P}[n], e \in F \ | \
c\subseteq d\cap e, \ b\cup e\setminus c= a, \ b \sqcup \widetilde{d} \in A \ \} \ \right\} .$$
\end{proof}

We provide a few applications of Theorem \ref{lili}.

\begin{exmp}{\em In $\mathrm{P}\mathrm{P}[3,\widetilde{3}]$ we have that $\{\{1,\overline{2} \}\} \ast \{ \{2,3,\overline{1}, \overline{2}\}\}=
\{ \{1,3,\overline{1}\} , \{1,2,3,\overline{1} \}\}.$ Indeed we must have $b=\{ 1\}$, $c=\{ 2\}$,
$d=\{ 2,3\}$, and $a_2= \{ 2\} + \{ 1,2\}=\{ 1\}$. Since $e \subseteq \{ 2\} \cap \{ 2,3\}= \{ 2\}$,
there are two options, either $e= \emptyset$ and then $a_1=\{ 1,2,3 \}$ and $a=\{ 1,2,3,\overline{1} \}$, or  $e = \{ 2\}$ and then
$a_1=\{ 1,3 \}$ and $a=\{ 1,3, \overline{1}\}.$
}
\end{exmp}

\begin{exmp}{\em For $A\in \mathrm{P}\mathrm{P}[n]$ set $A'=\pi^{-1}(\{\emptyset\}\times A)$. Let $F \in \mathrm{P}\mathrm{P}[n]$ then we have that:
$$A'\ast F \ = \ \Big\{ \ a \in \mathrm{P}[n] \ \ | \ \
O\{c \in \mathrm{P}[n], d \in A, e \in F \ | \ c \subseteq d \cap e, \  e\setminus c=a\} \ \Big\}.$$
Therefore
$$\left( \sum_{a \in A} s^a \right)\circ \left( \sum_{b \in F} x^b \right)\ = \ \sum_{a \in A'\ast F} x^a. $$
}
\end{exmp}

\begin{exmp}{\em  For $A  \in  \mathrm{P}\mathrm{P}[n]$ let
$\widehat{A} =\{a \in \mathrm{P}[n,\widetilde{n}] \ | \ a_1 =a_2 \in A \ \}$. Let $F \in  \mathrm{P}\mathrm{P}[n]$, then
$$\widehat{A}\ast F \ = \ \Big\{ \ a \in \mathrm{P}[n] \ \ \Big| \ \ O\{b\in A, c \in \mathrm{P}[n], e \in F \ | \
c\subseteq b\cap e, \ b\cup e\setminus c= a \ \} \ \Big\}.$$
$$\left( \sum_{a \in A} x^as^a \right)\circ \left( \sum_{b \in F} x^b \right)\ = \ \sum_{a \in \widehat{A}\ast F} x^a. $$
}
\end{exmp}

\section{Final Remarks}\label{s7}

We introduced an approach for the study the analogue of quantum-like structures in characteristic $2$. Our approach  is developed as follows: \\

\noindent 1) Quantization of the canonical phase space $k^n \times k^n$ over a field $k$  of characteristic zero may be identified with the
$k$-algebra of algebraic differential operators on $k^n.$ The Weyl algebra provides an explicit description by generators and relations
of the latter algebra.\\

\noindent 2) Classical propositional logic may be identified, to a good extend, with the study of regular
functions on $\mathbb{Z}_2^n$. We introduced  the algebra $\mathrm{BDO}_n$ as a
suitable analogue for algebraic differential operators on $\mathbb{Z}_2^n$.
We showed that $\mathrm{BDO}_n$ coincides with the algebra of linear endomorphisms of regular functions on $\mathbb{Z}_2^n$.
We introduced a couple of presentations by generators and relations for $\mathrm{BDO}_n$, namely, the quantum Boolean algebras
$\mathrm{BW}_n$ and $\mathrm{SBW}_n.$\\

\noindent 3) We shifted back from the algebro-geometric viewpoint, and study the quantum Boolean algebras
from the logical and set theoretical viewpoints.\\

Our work leaves several open questions and problems for future research:\\

\noindent 1) We considered the structural aspects of quantization in characteristic $2$. The
dynamical aspects  will be considered elsewhere.\\

\noindent 2) We studied the analogue for the Weyl algebra in characteristic $2.$ Recently,  \cite{co, deit, so},
there have been a remarkable interest in  characteristic $1$. Is there an analogue for the Weyl algebra in characteristic $1$.\\

\noindent 3) Categorification of the Weyl algebra has been considered in \cite{RE2}; categorification
of the Boole-Weyl  and shifted Boole-Weyl algebras remain to be addressed.\\

\noindent 4) The symmetric powers of Weyl algebras and linear Boolean algebras in characteristic zero were studied in  \cite{RE} and \cite{DR},
respectively. The analogue problems for the quantum Boolean algebras and the linear quantum
Boolean algebras are open.\\

\noindent 5) Our logical interpretation of quantum Boolean algebras was based on a specific choice of
connectives. It remains to study other connectives, perhaps with a more direct logical meaning. \\

\bigskip

\noindent ragadiaz@gmail.com\\
\noindent Instituto de Matem\'aticas y sus Aplicaciones, Universidad Sergio Arboleda, Bogot\'a, Colombia\\

\end{document}